\DeclareMathAlphabet{\mathpzc}{OT1}{pzc}{m}{it}
\newcommand{\marginextend}[1]{ \addtolength{\oddsidemargin}{-#1}  \addtolength{\evensidemargin}{-#1}
  \addtolength{\textwidth}{#1}\addtolength{\textwidth}{#1}}
\newcommand{\updownextend}[1]{ \addtolength{\topmargin}{-#1}  \addtolength{\textheight}{#1}
\addtolength{\textheight}{#1}}
\DeclareFontFamily{OT1}{pzc}{}
\DeclareFontShape{OT1}{pzc}{m}{it}{<-> s * [1.10] pzcmi7t}{}
\DeclareMathAlphabet{\mathpzc}{OT1}{pzc}{m}{it}
\DeclareSymbolFont{SY}{U}{psy}{m}{n}
\DeclareMathSymbol{\emptyset}{\mathord}{SY}{'306}
\theoremstyle{plain}
\newtheorem{thm}{Theorem}[section]
\newtheorem*{thm*}{Theorem}
\newtheorem{cor}[thm]{Corollary}
\newtheorem{lem}[thm]{Lemma}
\newtheorem{prop}[thm]{Proposition}
\newtheorem{defn}[thm]{Definition}
\newtheorem{rem}[thm]{Remark}
\newtheorem{ex}[thm]{Example}
\newtheoremstyle{named}{}{}{\itshape}{}{\bfseries}{.}{.5em}{#1 \thmnote{#3}}
\theoremstyle{named}
\numberwithin{equation}{section}
\def\C{{\mathbb C}}
\def\norm#1{\left\|{#1}\right\|}
\def\v{\varphi}
\def\ov{\overline}
\def\m{\mathcal}
\def\mb{\mathbb}
\def\mr{\mathrm}
\def\a{\alpha}
\def\wi{\widetilde}
\def\w{\widehat }
\def\beq{\begin{eqnarray}}
\def\eeq{\end{eqnarray}}
\def\beqa{\begin{eqnarray*}}
\def\eeqa{\end{eqnarray*}}
\def\ov{\overline}
\def\bl{\boldsymbol}
\def\i{\prime}
\def\bl{\boldsymbol}
\newcommand{\be}{\begin{equation}}
\newcommand{\ee}{\end{equation}}
\newcommand{\bea}{\begin{eqnarray}}
\newcommand{\eea}{\end{eqnarray}}
\newcommand{\Bea}{\begin{eqnarray*}}
\newcommand{\Eea}{\end{eqnarray*}}
\newcommand{\inner}[2]{\langle #1,#2 \rangle }%
\newcounter{cnt1}
\newcounter{cnt2}
\newcounter{cnt3}
\newcommand{\blr}{\begin{list}{$($\roman{cnt1}$)$}
 {\usecounter{cnt1} \setlength{\topsep}{0pt}
 \setlength{\itemsep}{0pt}}}
\newcommand{\bla}{\begin{list}{$($\alph{cnt2}$)$}
 {\usecounter{cnt2} \setlength{\topsep}{0pt}
 \setlength{\itemsep}{0pt}}}
\newcommand{\bln}{\begin{list}{$($\arabic{cnt3}$)$}
 {\usecounter{cnt3} \setlength{\topsep}{0pt}
 \setlength{\itemsep}{0pt}}}
\newcommand{\el}{\end{list}}
\let\oldbibliography\thebibliography
\renewcommand{\thebibliography}[1]{\oldbibliography{#1}
\setlength{\itemsep}{2pt}} 
\newcommand{\dnorm}[1]{\left\lVert#1\right\rVert_{\m D}}
\newcommand{\mnorm}[1]{\left\lVert#1\right\rVert_{m}}
\begin{document}
\title[Multiplication operator on Bergman space] {Multiplication operator on the Bergman space by a proper holomorphic  map}

\author[Ghosh]{Gargi Ghosh}
\address[Ghosh]{Department of Mathematics and Statistics, Indian Institute of Science Education and Research Kolkata, Mohanpur 741246, Nadia, West Bengal, India}

\email[Ghosh]{gg13ip034@iiserkol.ac.in}
\thanks{The work of G. Ghosh is supported by Senior Research Fellowship funded by CSIR}

\subjclass[2010]{32H35,46E20,47B37} \keywords{Proper holomorphic mappings, Reducing subspaces,  Bergman Space}

\maketitle
\begin{abstract}
Suppose that $\bl f := (f_1,\ldots,f_d):\Omega_1\to\Omega_2$ is a proper holomorphic map between two bounded domains in $\mathbb C^d.$ We show that the multiplication operator (tuple) $\mathbf M_{\bl f}=(M_{f_1},\ldots, M_{f_d})$ on the Bergman space $\mathbb A^2(\Omega_1)$ admits a non-trivial minimal joint reducing subspace,  say $\mathcal M.$ and the restriction of $\mathbf M_{\bl f}$ to $\mathcal M$ is unitarily equivalent to Bergman operator on $\mb A^2(\Omega_2).$
\end{abstract}

\section{Introduction}
For $d\geq 1,$ let $\Omega$ be a bounded domain  in $\mb C^d.$ The Bergman space on $\Omega,$ denoted by $\mb A^2(\Omega)$ is the subspace of holomorphic functions in $L^2(\Omega)$ with respect to Lebesgue measure on $\Omega.$ The Bergman space  $\mb A^2(\Omega)$ is a  Hilbert space with a reproducing kernel, called the Bergman kernel.  We study multiplication operators on $\mb A^2(\Omega)$ induced by holomorphic functions. Thus for a bounded holomorphic functions $\bl g=(g_1,\ldots, g_d):\Omega\to\mb C^d,$  we define 
$M_{g_i}:\mb A^2(\Omega)\to\mb A^2(\Omega)$
by
\Bea
M_{g_i}\v=g_i\v, \,\,  \v\in\mb A^2(\Omega) \text{~for~} i=1,\ldots, d.
\Eea
Clearly, each $M_{g_i}$ is a bounded linear operator.
Let $\mathbf M_{\bl g}$ denote the tuple of operators $(M_{g_1}.\ldots,M_{g_d}).$ We put $\mathbf M:=(M_1,\ldots, M_d),$ where $M_i$ denotes the multiplication by the $i$-th coordinate function of $\Omega$ on $\mb A^2(\Omega)$ for $i=1,\ldots, d,$ $\mathbf M$ is called the {\it Bergman operator} on $\mb A^2(\Omega)$ or the Bergman operator {\it associated to} a domain $\Omega.$ A {\it joint reducing subspace} for a  tuple 
$\mathbf T=(T_1,\ldots,T_d)$ of operators on a Hilbert space $\m H$ is a closed subspace $\m M$ of $\m H$ such that 
\Bea
T_i\m M\subseteq\m M \text{~ and~} T_i^*\m M\subseteq\m M \text{~for~} i=1,\ldots,d.
\Eea 
It is easy to see that $\mathbf T$ has a  joint reducing subspace of  if and only if there exists an orthogonal projection $P$ ($P:\m H\to\m H, P^2=P=P^*$) such that 
\Bea
PT_i=T_iP \text{~for~} i=1,\ldots, d.
\Eea 
Here $\m M=P\m H$ is the corresponding joint reducing subspace of $\mathbf T.$
A joint reducing subspace $\m M$ of $\mathbf T$  is called {\it minimal} or {\it irreducible} if only joint reducing subspaces contained in $\m M$ are $\m M$ and $\{0\}.$ In particular, the operator tuple $\mathbf T$ is called ${\it irreducible}$ if only  joint reducing subspaces contained in $\m H$ are $\m H$ and $\{0\}.$

It is natural to start with a bounded biholomorphic multiplier $\bl f$ on $\mb A^2(\Omega).$  It is easy to see that $\mathbf M_f$ is irreducible as follows. Let $\Omega_1,\Omega_2$ be bounded domains in $\mb C^d$ and $\bl f=(f_1,\ldots,f_d):\Omega_1\to\Omega_2$ be a biholomorphism. Let $J_{\bl f}=\det\big(\!\!\big(\frac{\partial f_j}{\partial z_i}\big)\!\!\big)_{i,j=1}^d$ denote the jacobian of $\bl f.$  The linear map $U_{\bl f}:\mb A^2(\Omega_2)\to\mb A^2(\Omega_1)$ defined by 
\Bea
U_{\bl f}\v=(\v\circ \bl f)J_{\bl f} \text{~for~} \v\in \mb A^2(\Omega_2)
\Eea
is a unitary satisfying $U_fM_i=M_{f_i}U_f$ for $i=1,\ldots,d.$   Therefore, $\mathbf M_{\bl f}$ is unitarily equivalent to the Bergman operator on $\mb A^2(\Omega_2).$ Since the Bergman operator associated to any domain is irreducible, it follows that $\mathbf M_{\bl f}$ is irreducible.

This is the motivation for considering the well behaved class of bounded multipliers induced by proper holomorphic maps.
 A holomorphic map $\bl f:\Omega_1\to\Omega_2$ is said to be {\it proper} if $\bl f^{-1}(K)$  is compact in $\Omega_1$ whenever $K\subseteq\Omega_2$ is compact. Clearly, a biholomorphic map proper, basic properties of proper holomorphic maps are discussed in \cite[Chapter 15]{MR2446682}. The problem of understanding the reducing subspaces of multiplication operators by finite Blaschke products (same as the class of proper holomorphic self maps of the unit disc $\mb D$ in $\mb C$)  on the Bergman space  $\mb A^2(\mb D)$ in  and parametrizing the number of minimal reducing subspaces has been studied profusely, see \cite{MR3363367} and references therein. Analogous problems for several variables are pursued in \cite{HZ,MR3902057}. In this paper, we give a bare hand derivation of the two results:
\begin{enumerate}
    \item [1.] If $\Omega_1,\Omega_2\subseteq\mb C^d$ are two bounded domains  and $f=(f_1,\ldots, f_d):\Omega_1\to\Omega_2$ is a proper holomorphic mapping, then $\mathbf M_f=(M_{f_1},\ldots,M_{f_d})$ has a non-trivial minimal joint reducing subspace $\m M.$
    \item[2.] The restriction of $\mathbf M_f$ to $\m M$ is unitarily equivalent to the Bergman operator on $\mb A^2(\Omega_2).$
\end{enumerate}
We draw attention to the simplicity of our methods and to the fact that we do not require any regularity on the domains $\Omega_1,\Omega_2$ to obtain the aforementioned results. Therefore, it allows us to specialize the domains $\Omega_1,\Omega_2$ to be different standard domains and obtain explicit description  of the non-trivial minimal reducing subspace and the restriction operator ${\mathbf M_f}_{\vert_{\m M}}.$ In particular, we specialize to
\begin{enumerate}
\item [1.] $\Omega_2$ to be a complete Reinhardt domain and $\Omega_1$ is any bounded domain.
\vspace{.1 cm}
\item[2.] $\Omega_1=\Omega_2=\mb D,$ the unit disc in $\mb C,$ the result obtained here is one of the main results in  \cite[Theorem 25]{MR2503238}, \cite[Theorem 15, p. 393]{MR1969797} and the main result in \cite{MR2068435}.

\item[3.] $\Omega_1=\Omega_2=\mb D^d,$ the unit polydisc in $\mb C^d.$  
\vspace{.1 cm}
\item[4.] $\Omega_1=\Omega_2=\mb G_d,$ the symmetrized polydisc in $\mb C^d.$ This domain has been studied extensively during last two decades from the viewpoint of function theory and operator theory, for example, see \cite{MR3188714, MR2142182, MR2135687} and references therein. 
\end{enumerate}

For two domains $D_1$ and $D_2$ in $\mb C^d$ and an analytic covering $p : D_1\to D_2,$ a {\it deck transformation} or an {\it automorphism} of $p$  is a biholomorphism $h : D_1 \to D_1$ such that $p \circ h = p.$ The set of all deck transformations of $p$ forms a group. We call it the group of deck transformations of $p$ or group of automorphisms of $p$ and denote it by ${\rm Deck}(p)$ or ${\rm Aut}(p).$ It is well known that a proper holomorphic mapping $f:\Omega_1\to\Omega_2$ is a (unbranched) analytic covering 
\Bea
f:\Omega_1\setminus f^{-1}f( V_f)\to\Omega_2\setminus f(V_f),
\Eea
where $V_f:=\{z\in\Omega_1:J_f(z)=0\}$ is the {\it branch locus} of $f$ \cite[Subsection 15.1.10]{MR2446682}.
The group ${\rm Deck}(f)$ of this (unbranched) analytic covering is called the group of deck transformations of the proper map $f:\Omega_1\to\Omega_2.$
An analytic covering $p$ is called {\it Galois} if Deck$(p)$ acts transitively on the fibre set  $p^{-1}(\{ w\})$ of $ w\in D_2$ for some (and thus for all) $ w \in D_2.$ Equivalently, cardinality of the set $p^{-1}(\{ w\})$ equals to the order of Deck$(p).$ In other words, for every $w\in D_2,$ the set $p^{-1}(\{w\})$ is a ${\rm Deck} (p)$-orbit, and vice versa. Briefly, $D_2=D_1/ {\rm Deck} (p).$ Hence an analytic cover $p$ is Galois if and only if $D_1/{\rm Deck}(p) \cong D_2.$

We recall a notion from \cite{MR807258}. We say that  $f:D_1\to D_2$ is {\it factored by automorphisms} if there exists a finite subgroup $G\subseteq {\rm  Aut}(D_1)$ such that
\bea\label{factor}
f^{-1}f(z)=\bigcup_{\rho\in G} \rho(z) \text{~for all~} z\in D_1.
\eea
Form this it follows that $f$ is $G$-invariant, that is, $f\circ \rho=f$ for $\rho\in G.$  If such a group $G$ exists  then $f$ factors a $\tilde f\circ\eta,$ where $\eta:D_1\to D_1/G$ is a quotient map and $\tilde f:D_1/G\to D_2$ is a biholomorphism. That is, $G\cong {\rm Deck} (f).$

Now we describe a procedure developed in \cite{MR3906291, BDGS} to obtain joint reducing subspaces of $\mathbf M_f$ acting on a Hilbert space $\m H\subseteq {\rm Hol}(\Omega_1,\mb C^d)$ (in particular,   $\mb A^2(\Omega_1)$) with $G$-invariant reproducing kernel $K,$ that is.
\Bea
K(\rho\cdot z ,\rho\cdot  w)=K(z,  w) \text{~for~} z, w\in\Omega_1, \rho\in G. 
\Eea
for a proper holomorphic map $f=(f_1,\ldots, f_d):\Omega_1\to\Omega_2$ which factors through automorphisms $G \subseteq {\rm Aut}(\Omega_1),$ here $G \subseteq {\rm Aut}(\Omega_1)$ is the group which appears in \eqref{factor}. If  $\w G$ denotes the set of equivalence class of irreducible representations $G,$ then it is shown that 
\Bea
\m H=\bigoplus_{\varrho\in\w G}\mb P_\varrho\m H,
\Eea
where $\mb P_\varrho$ is an orthogonal projection satisfying $\mb P_\varrho M_{f_i}=M_{f_i}\mb P_\varrho$ for $\varrho \in\w G$ and $i=1,\ldots, d.$ Thus we obtain a family $\{\mb P_\varrho\m H:\varrho\in\w G\}$ of joint reducing subspaces for the multiplication tuple $\mathbf M_f.$ 

A factorization of a proper holomorphic map does not always exist (see \cite[p. 223]{MR1131852}, \cite[Section 4.7, p. 711]{MR667790}). We have justified by explicit examples that proper holomorphic self-maps of $\mb D,\mb D^d$ and $\mb G_d$ do not factor through automorphisms, in general, by showing their groups of deck transformations are trivial ( since $G\cong {\rm Deck}(f),$ if $f:\Omega_1\to\Omega_2$ factors through automorphisms $G\subseteq {\rm Aut}(\Omega_1).$) We are able to produce a non-trivial joint minimal reducing subspace $\m M$ of $\mathbf M_f$ acting on $\mb A^2(\Omega_1)$ and describe the restriction operator ${\mathbf M_f}_{\vert_{\m M}},$  even if $f$ does not factor through automorphisms.

\section{Proper holomprphic maps and reducing subspace}
Let $f:\Omega_1\to\Omega_2$ be a proper holomorphic map of multiplicity $m$ between two bounded domains in $\mb C^d.$ We denote the Bergman space on $\Omega_i$ by $\mb A^2(\Omega_i)$ 
and the reproducing kernels by $K_i$ for $i=1,2,$ respectively. 

Let  $\Gamma_f:\mb A^2(\Omega_2)\to\mb A^2(\Omega_1)$ be the linear map  defined by the rule:
\bea
\label{genp}\Gamma_f  \psi=\frac{1}{ \sqrt m}  (\psi\circ f)J_f, \,\, \psi\in \mb A^2(\Omega_2),
\eea
where $J_f$ is the jacobian of the proper map $f.$
Next we note that $f(N)$ has measure zero with respect to the Lebesgue measure on $\mb C^n,$ where $N=Z(J_f)$ is the zero set of $J_f.$ By the change of variables formula, we obtain,
\bea\label{isome}
\int_{\Omega_1}\vert J_f\vert^2\vert \v\circ f\vert^2 dV \nonumber &=&\int_{\Omega_1-f^{-1}f(N)}\vert J_f\vert^2\vert \v\circ f\vert^2 dV \\
&=& m \int_{\Omega_2-f(N)}\vert \v\vert^2 dV=m \int_{\Omega_2}\vert \v\vert^2 dV  ,
\eea
where $\v\in \mb A^2(\Omega_2)$ and $dV$ is the Lebesgue measure.
This shows that the map $\Gamma_f$ is an isometry. 

In the following proposition, we provide a criterion for a proper holomorphic map $f$ to be biholomorphic in terms of the isometry $\Gamma_f.$ 

\begin{prop}
The linear map $\Gamma_f:\mb A^2(\Omega_2)\to \mb A^2(\Omega_1)$ is surjective if and only if  $f$ is biholomorphic.
\end{prop}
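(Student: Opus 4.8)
The plan is to prove the two implications separately, with the forward direction (surjectivity $\Rightarrow$ biholomorphic) carrying all of the content. The reverse implication is immediate: if $f$ is biholomorphic then its multiplicity is $m=1$, so $\Gamma_f\psi=(\psi\circ f)J_f$ coincides with the unitary $U_f$ recalled in the introduction, and a unitary is in particular surjective.

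For the forward direction I would extract concrete functional equations from surjectivity by writing a few distinguished elements of $\mb A^2(\Omega_1)$ as values of $\Gamma_f$. Since $\Omega_1$ is bounded, the constant function $1$ and each coordinate function $z\mapsto z_i$ lie in $\mb A^2(\Omega_1)$ (they are bounded and $\Omega_1$ has finite volume). Surjectivity then furnishes $\psi_0,\psi_1,\dots,\psi_d\in\mb A^2(\Omega_2)$ with $\frac1{\sqrt m}(\psi_0\circ f)J_f=1$ and $\frac1{\sqrt m}(\psi_i\circ f)J_f=z_i$ for $i=1,\dots,d$. The first relation reads $(\psi_0\circ f)J_f=\sqrt m$, and because $\psi_0\circ f$ is holomorphic, hence finite valued, this forces $J_f$ to be nowhere zero on $\Omega_1$; consequently $\psi_0\circ f=\sqrt m/J_f$ is nowhere zero as well, and since a proper holomorphic map is onto, $\psi_0$ is nowhere zero on $\Omega_2$.

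Dividing the $i$-th relation by the first (legitimate, as $J_f$ does not vanish) yields the key identity $\psi_i\circ f=z_i\,(\psi_0\circ f)$ on all of $\Omega_1$, where $z_i$ denotes the $i$-th coordinate function. This is where the argument does its work: both $\psi_i\circ f$ and $\psi_0\circ f$ are constant along the fibres of $f$, so for any two points $a,b$ lying in a common fibre $f^{-1}(w_0)$, evaluating the identity at $a$ and at $b$ gives $a_i\,\psi_0(w_0)=\psi_i(w_0)=b_i\,\psi_0(w_0)$ for each $i$. Since $\psi_0(w_0)\neq0$, this forces $a_i=b_i$ for all $i$, that is $a=b$. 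Hence every fibre of $f$ is a single point, so $f$ is injective, and an injective (equivalently, multiplicity-one) proper holomorphic map between bounded domains is biholomorphic by the basic properties cited in the introduction.

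I expect the main obstacle to be conceptual rather than computational, namely recognizing that the abstract surjectivity of $\Gamma_f$ need only be tested against the constants and the coordinate functions, and that the non-vanishing of $J_f$ (which is precisely what places $1$ in the range) is the hinge permitting one to divide the relations and run the fibre-separation argument. As a more structural alternative, one could instead compute the reproducing kernel of the range $\mathcal M=\Gamma_f(\mb A^2(\Omega_2))$, namely $K_{\mathcal M}(z,w)=\frac1m J_f(z)\overline{J_f(w)}K_2(f(z),f(w))$, note that surjectivity forces $K_{\mathcal M}=K_1$, and then read off both the non-vanishing of $J_f$ from positivity of $K_1$ on the diagonal and the collinearity of the kernel sections $K_1(\cdot,a)$ and $K_1(\cdot,b)$ at distinct points $a,b$ of a fibre, which again contradicts the separation of points by $\mb A^2(\Omega_1)$.
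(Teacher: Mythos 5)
Your proof is correct and takes essentially the same approach as the paper: both test the surjectivity of $\Gamma_f$ against the constant function $1$ and the coordinate functions (all of which lie in $\mb A^2(\Omega_1)$ by boundedness of $\Omega_1$), and both use the resulting nonvanishing of the preimage of $1$ at a point of $\Omega_2$ to force any two points in a common fibre of $f$ to coincide. The only differences are organizational --- the paper argues by contradiction at one pair of fibre points using a single coordinate and derives only $J_f(\bl w)=J_f(\bl w')$, whereas you argue directly and extract the (slightly stronger, but not needed) global nonvanishing of $J_f$ --- and your kernel-based alternative is merely sketched as an aside.
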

\begin{proof}

If $f$ is biholomorphic, then $f^{-1}:\Omega_2\to\Omega_1$ is also biholomorphic. It follows from the chain rule that $\Gamma_{f^{-1}}:\mb A^2(\Omega_1)\to\mb A^2(\Omega_2)$ is the inverse of  $\Gamma_f.$ Hence $\Gamma_f$ is surjective.

Conversely, suppose $\Gamma_f$ is surjective. 
Arguing by contradiction, suppose that $f$ is not bijective. Since $f$ is a proper holomorphic map, it is always surjective, so $f$ fails to be injective by our assumption. Suppose $\bl w$ and $\bl w'$ are two distinct points in $\Omega_1$ such that $f(\bl w) = f(\bl w') = \bl u.$ Since $\bl w,\bl w'$ are distinct, there exists at least one $1 \leq i \leq n$ such that $w_i \neq w'_i.$ Without loss of generality, let consider $w_1 \neq w'_1.$  We consider two holomorphic functions $g_i : \Omega_1 \to \mb C$ for $i=1,2,$ where $g_1(\bl z) = 1 {\rm ~and ~} g_2(\bl z) = z_1$ for all $\bl z=(z_1,\ldots,z_n) \in \Omega_1.$ Since $\Omega_1$ is a bounded domain, both $g_i$'s are in $\mb A^2(\Omega_1)$. Also, $\Gamma_f$ is surjective, so there exist $\phi_i \in \mb A^2(\Omega_2),$ for $i=1,2$ such that $\Gamma_f(\phi_i) = \frac{1}{ \sqrt m} J_f ~ \phi_i \circ f = g_i,$ $i=1,2.$ Note that $\frac{1}{ \sqrt m} J_f(\bl w) \phi_1(\bl u) = 1 = \frac{1}{ \sqrt m} J_f(\bl w') \phi_1(\bl u),$ alternatively, $\phi_1(\bl u)\big(J_f(\bl w)-J_f(\bl w')\big) = 0.$ Since the product of two complex numbers $\frac{1}{ \sqrt m} J_f(\bl w)$ and $\phi_1(\bl u)$ is equal to $1,$ so $\phi_1(\bl u)$ cannot be equal to $0.$ Then $J_f(\bl w)= J_f(\bl w').$ This indicates that $\frac{1}{ \sqrt m} \phi_2(\bl u)\big(J_f(\bl w)-J_f(\bl w')\big)= 0 = w_1 - w'_1 \neq 0,$ a contradiction. Hence, our assumption was wrong and $f$ is injective. Since $f$ is a bijective holomorphic map, it is a biholomorphism, by \cite[p. 37, Theorem 1.6.5.]{L}.

\end{proof}

Now onward by a proper map we mean a proper holomorphic map which is not
biholomorphic. For a function $f=(f_1.\ldots, f_d):\Omega_1\to\Omega_2,$ the symbol $\mathbf M_f$ is defined to be the commuting tuple $(M_{f_1},\ldots,M_{f_d}).$  Let $\mathbf M:=(M_{1},\ldots,M_{d})$ denote the commuting tuple of multiplication operators by the coordinate functions, that is,  $M_{i}$ denotes multiplication by the $i$-th coordinate function, $ i=1,\ldots, d.$ 

 
The following theorem exhibits explicitly a non-trivial joint  reducing subspace (consequently, at least two) of the tuple of multiplication operators $\mathbf M_f$ by a proper map $f:\Omega_1\to\Omega_2$  on the Bergman space $\mb A^2(\Omega_1).$ This result strengthens \cite[Theorem 1.4]{HZ} by displaying the joint reducing subspace of $\mathbf M_f$ explicitly without making any reference to operator algebras.
\begin{thm}\label{redsub}
 Suppose that $\Omega_1, \Omega_2$ are bounded domains in $\mb C^d$ and $f: \Omega_1 \to\Omega_2$ is a proper holomorphic map of multiplicity $m.$ If $\Gamma_{f} : \mb A^2(\Omega_2) \to \mb A^2(\Omega_1)$ is defined by 
\Bea
\Gamma_{f}  \psi=\frac{1}{ \sqrt m} (\psi\circ f)  J_{f}, \,\, \psi\in \mb A^2(\Omega_2),
\Eea
where $J_{f}$ is the jacobian of the proper map $f.$ Then $\Gamma_{f}\big(\mb A^2(\Omega_2)\big)$ is a joint reducing subspace for the commuting tuple $\mathbf M_f = (M_{f_1},\ldots,M_{f_d})$ on $\mb A^2(\Omega_1).$
\end{thm}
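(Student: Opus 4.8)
The plan is to produce the orthogonal projection $P$ of $\mb A^2(\Omega_1)$ onto $\m M:=\Gamma_f\big(\mb A^2(\Omega_2)\big)$ and to verify that $P$ commutes with each $M_{f_i}$. Since $\m M$ is closed, being the image of the isometry $\Gamma_f$ exhibited in \eqref{isome}, this is precisely the statement that $\m M$ reduces the tuple $\mathbf M_f$. Writing $M_i$ for multiplication by the $i$-th coordinate function on $\mb A^2(\Omega_2)$, the projection is $P=\Gamma_f\Gamma_f^*$ (because $\Gamma_f^*\Gamma_f=I$), and the entire argument reduces to the two intertwining identities $M_{f_i}\Gamma_f=\Gamma_f M_i$ and $\Gamma_f^*M_{f_i}=M_i\Gamma_f^*$ for $i=1,\dots,d$.

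The first identity is immediate. If $w=f(z)$ then the $i$-th coordinate $w_i$ of $w$ equals $f_i(z)$, so $f_i\cdot(\psi\circ f)=(M_i\psi)\circ f$ for every $\psi\in\mb A^2(\Omega_2)$; multiplying by $\tfrac{1}{\sqrt m}J_f$ yields $M_{f_i}\Gamma_f\psi=\Gamma_f(M_i\psi)$. As $\Omega_2$ is bounded, $M_i\psi\in\mb A^2(\Omega_2)$, so this already shows that $\m M$ is invariant under every $M_{f_i}$; in passing it also records that $\Gamma_f$ is a unitary from $\mb A^2(\Omega_2)$ onto $\m M$ carrying the Bergman operator on $\mb A^2(\Omega_2)$ to the restriction of $\mathbf M_f$ to $\m M$.

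The heart of the matter is the second identity, which promotes invariance to reduction: granting it, $M_{f_i}P=M_{f_i}\Gamma_f\Gamma_f^*=\Gamma_f M_i\Gamma_f^*=\Gamma_f\Gamma_f^*M_{f_i}=PM_{f_i}$. To obtain it I would first compute $\Gamma_f^*$ explicitly. For $g\in\mb A^2(\Omega_1)$ and $\psi\in\mb A^2(\Omega_2)$ I would write $\langle g,\Gamma_f\psi\rangle_{\Omega_1}=\tfrac{1}{\sqrt m}\int_{\Omega_1}g\,\overline{J_f}\,\overline{\psi\circ f}\,dV$ and push this integral down to $\Omega_2$ exactly as in \eqref{isome}: over a generic $w$ the fibre $f^{-1}(w)$ consists of $m$ points $z^{(1)},\dots,z^{(m)}$, near each of which $f$ is locally biholomorphic with real Jacobian $|J_f|^2$, and the branch locus contributes nothing since $f(N)$ has measure zero. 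The change of variables then gives $\Gamma_f^*g(w)=\tfrac{1}{\sqrt m}\sum_{k=1}^m g(z^{(k)})/J_f(z^{(k)})$. With this formula in hand the second identity falls out of the single observation that $f_i$ is constant on each fibre, namely $f_i(z^{(k)})=w_i$ for every $k$, whence $\Gamma_f^*(M_{f_i}g)(w)=w_i\,\Gamma_f^*g(w)=(M_i\Gamma_f^*g)(w)$.

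The step I expect to be the main obstacle is the explicit evaluation of $\Gamma_f^*$: one must justify the fibrewise change of variables on $\Omega_1\setminus f^{-1}f(N)$, confirm that $J_f(z^{(k)})\neq 0$ off the branch locus so that the sum is meaningful, and check that the resulting symmetric fibre-sum is a genuine element of $\mb A^2(\Omega_2)$ (holomorphic across $f(N)$ by Riemann's removable-singularity theorem, with the $L^2$ bound supplied by $\Gamma_f$ being an isometry). Everything else is formal manipulation with the two intertwiners.
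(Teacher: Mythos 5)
Your proposal is correct, and its engine is the same as the paper's: exhibit the orthogonal projection onto ${\rm ran}\,\Gamma_f$ explicitly and reduce commutation with each $M_{f_i}$ to the fact that $f_i$ is constant on the fibres of $f$ (the paper's identity $f_i\circ f^k\circ f=f_i$ is exactly your $f_i(z^{(k)})=w_i$). The difference is what is proved versus what is cited. The paper takes the projection formula $P\varphi=\tfrac1m\sum_{k=1}^m(\varphi\circ f^k\circ f)J_{f^k\circ f}$ ready-made from \cite[p.~551]{MR3133729} and verifies $PM_{f_i}=M_{f_i}P$ in one display; you rebuild that same formula as $P=\Gamma_f\Gamma_f^*$ by computing $\Gamma_f^*g(w)=\tfrac{1}{\sqrt m}\sum_k g(z^{(k)})/J_f(z^{(k)})$ through the fibre-wise change of variables (the two expressions agree by the chain rule, since $J_{f^k\circ f}=(J_{f^k}\circ f)\,J_f$), and you split the commutation into the two intertwining identities $M_{f_i}\Gamma_f=\Gamma_f M_i$ and $\Gamma_f^*M_{f_i}=M_i\Gamma_f^*$. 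What your route buys: it is self-contained, and your first identity is verbatim the paper's proof of Theorem \ref{bergshift}, so the unitary equivalence with the Bergman operator on $\mb A^2(\Omega_2)$ comes in the same stroke. What it costs is exactly the step you flagged: identifying $\Gamma_f^*g$ with the fibre sum requires knowing that this sum lies in $\mb A^2(\Omega_2)$, i.e.\ is holomorphic across $f(N)$ and square-integrable. This is not a formality — the change of variables alone only shows that the fibre sum differs from $\Gamma_f^*g$ by a function $L^2$-orthogonal to $\mb A^2(\Omega_2)$, and such orthogonality is not preserved under multiplication by $w_i$ (because $\bar w_i\psi$ is no longer holomorphic), so without holomorphy of the fibre sum your second identity would not follow. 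Your toolkit for that step is the right one: local holomorphy of the inverses off the branch locus, an $L^2$ bound (which really comes from Cauchy--Schwarz along fibres plus the same change of variables, rather than from the isometry of $\Gamma_f$ as such), and the removable-singularity theorem for $L^2$ holomorphic functions across analytic sets; this is precisely the content of the formula the paper imports by citation.
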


\begin{proof}
Being the image of a Hilbert space under the isometry $\Gamma_f,$  $\Gamma_f \big(\mb A^2(\Omega_2)\big)$ is  a closed subspace of $\mb A^2(\Omega_1).$ The orthogonal projection $P$ from $\mb A^2(\Omega_1)$ onto $\Gamma_{f}\big(\mb A^2(\Omega_2)\big)$ is given by the following formula \cite[p. 551]{MR3133729}
\bea \label{projec}
P\v = \frac{1}{m} \displaystyle\sum_{k=1}^m (\v \circ f^k \circ f) J_{f^k \circ f}, \,\, \v \in \mb A^2(\Omega_1),
\eea
where $\{f^j\}_{j=1}^m$ are the local inverses of $f.$ Let $e_i : \Omega_2 \to \mb C$  denote the coordinate projections  defined by $e_i(\bl w) = w_i$ for $i=1,\ldots, d.$ So that 
\bea\label{projeq}
e_i\big(f(\bl z)\big) = (e_i \circ f)(\bl z) = f_i(\bl z).
\eea 
\bea \label{projeq1}
(PM_{f_i})\v =P(f_i \v)=\frac{1}{m}\sum_{k=1}^m(f_i \circ f^k \circ f) ~(\v \circ f^k \circ f)~ J_{f^k \circ f}.
\eea
Repetitive use of the relation \eqref{projeq} and the fact $(f \circ f^k )(\bl z) = \bl z,$ lead us to 
$$f_i \circ f^k \circ f = (e_i \circ f) \circ (f^k \circ f) =e_i \circ (f \circ f^k) \circ f= e_i \circ f =f_i.$$
Hence  from \eqref{projeq1}, it  follows immediately that
$$(PM_{f_i})\v = f_i P\v =( M_{f_i}P)\v \text{~for~} \v \in \mb A^2 (\Omega_1).$$
This shows that $P$ commutes with each component of $(M_{f_1},\ldots,M_{f_d}).$ Hence $\Gamma_{f}\big(\mb A^2(\Omega_2)\big)$ is a joint reducing subspace of the tuple $\mathbf M_{f}.$
\end{proof}
\begin{rem}\rm
Suppose $\alpha \in \mathcal C(\Omega_2, \mb R_{>0}).$ In \cite{MR3133729}, the author describes the map $\Gamma_f : \mb A_\alpha^2(\Omega_2) \to \mb A_{\alpha \circ f}^2(\Omega_1)$ for weighted Bergman spaces. In that case, the projection $P$ in \eqref{projec} corresponds to the subspace $\Gamma_{f}\big(\mb A_\alpha^2(\Omega_2)\big)$ of $\mb A_{\alpha \circ f}^2(\Omega_1).$ In the similar way as above, $P$ intertwines with $\mathbf M_f$ on $\mb A_{\alpha \circ f}^2(\Omega_1)$. Hence, the above result can be stated in much more generality, that is, $\Gamma_{f}\big(\mb A_\alpha^2(\Omega_2)\big)$ is a joint reducing subspace for the commuting tuple $\mathbf M_f$ on $\mb A_{\alpha \circ f}^2(\Omega_1).$
\end{rem}

In the next theorem, we offer a canonical description of the restriction of $\mathbf M_f$ to the joint reducing subspace  ${\rm ran} ~\Gamma_f.$ 



\begin{thm}\label{bergshift}
The restriction of $\mathbf M_{f} $ to  $\Gamma_f\big(\mb A^2(\Omega_2)\big)$ is unitarily equivalent to $\mathbf M $ on ${\mb A^2(\Omega_2)}.$ Consequently, the restriction of $\mathbf M_{f} $ to  $\Gamma_f\big(\mb A^2(\Omega_2)\big)$ is irreducible.
\end{thm}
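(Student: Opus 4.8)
The plan is to use the isometry $\Gamma_f$ itself as the implementing unitary. Recall from the computation in \eqref{isome} that $\Gamma_f:\mb A^2(\Omega_2)\to\mb A^2(\Omega_1)$ is an isometry, so it is a unitary from $\mb A^2(\Omega_2)$ onto its range $\Gamma_f\big(\mb A^2(\Omega_2)\big)$, which is precisely the subspace that Theorem~\ref{redsub} shows to be joint reducing for $\mathbf M_f$. It therefore suffices to verify that $\Gamma_f$ intertwines the coordinate multiplication tuple $\mathbf M=(M_1,\dots,M_d)$ on $\mb A^2(\Omega_2)$ with $\mathbf M_f=(M_{f_1},\dots,M_{f_d})$ restricted to the range, that is, that $M_{f_i}\Gamma_f=\Gamma_f M_i$ for each $i=1,\dots,d.$

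First I would carry out the intertwining computation, which is the heart of the argument. For $\psi\in\mb A^2(\Omega_2)$ one has on one side $M_{f_i}\Gamma_f\psi=\tfrac{1}{\sqrt m}\,f_i\,(\psi\circ f)\,J_f,$ while on the other side $\Gamma_f M_i\psi=\Gamma_f(e_i\psi)=\tfrac{1}{\sqrt m}\big((e_i\psi)\circ f\big)J_f=\tfrac{1}{\sqrt m}\,(e_i\circ f)\,(\psi\circ f)\,J_f.$ By \eqref{projeq} we have $e_i\circ f=f_i,$ so the two expressions coincide and $M_{f_i}\Gamma_f=\Gamma_f M_i$ for every $i.$ Since $\Gamma_f$ is unitary onto the reducing subspace, this exhibits the unitary equivalence of ${\mathbf M_f}_{\vert_{\Gamma_f(\mb A^2(\Omega_2))}}$ with $\mathbf M$ on $\mb A^2(\Omega_2).$

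Finally, for the irreducibility claim I would invoke the fact, recalled in the introduction, that the Bergman operator $\mathbf M$ associated to any bounded domain is irreducible. Because irreducibility of a commuting tuple is preserved under unitary equivalence (a joint reducing subspace on one side corresponds, via the implementing unitary, to a joint reducing subspace on the other), the restriction ${\mathbf M_f}_{\vert_{\Gamma_f(\mb A^2(\Omega_2))}}$ inherits irreducibility from $\mathbf M$ on $\mb A^2(\Omega_2).$

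I do not anticipate a genuine obstacle: the argument reduces to a one-line verification of the intertwining identity together with an appeal to the known irreducibility of the Bergman operator. The only point requiring care is the bookkeeping of the composition $e_i\circ f=f_i,$ namely keeping straight that multiplication by the $i$-th coordinate on $\Omega_2$ pulls back under $\Gamma_f$ exactly to multiplication by the component $f_i$ on $\Omega_1;$ this is precisely what makes $\Gamma_f$ the correct intertwiner rather than merely some isometry whose range happens to reduce $\mathbf M_f.$
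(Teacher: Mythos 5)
Your proposal is correct and is essentially identical to the paper's own proof: both use the isometry $\Gamma_f$ as the implementing unitary, verify the intertwining relation $M_{f_i}\Gamma_f=\Gamma_f M_i$ via the identity $e_i\circ f=f_i$, and deduce irreducibility from the known irreducibility of the Bergman operator together with invariance of irreducibility under unitary equivalence. Your write-up is in fact slightly more explicit than the paper's, which leaves the final irreducibility step implicit after citing the intertwining computation and the isometry property from \eqref{isome}.
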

\begin{proof}
The map $\Gamma_{f} : \mb A^2(\Omega_2) \to \Gamma_f\big(\mb A^2(\Omega_2)\big)$ is  surjective and it intertwines $M_{f_i} $ on $  \Gamma_{f}\big(\mb A^2(\Omega_2)\big)$ and $M_{i}$ on $\mb A^2(\Omega_2)$ for all $i=1,\ldots,d,$ because
$$M_{f_i}\Gamma_f \psi = f_i( \psi \circ f )J_f = (e_i\circ f)( \psi \circ f) J_f = \Gamma_f(e_i \psi) = \Gamma_{f} M_{i} \psi  \text{~for~}\psi \in \mb A^2(\Omega_2).$$
 By Equation \eqref{isome}, $\Gamma_f$ is an isometry. This completes the proof.
\end{proof}

\begin{rem}
For the sake of completeness, we state the version of above theorem for weighted Bergman spaces. It states that the restriction of $\mathbf M_{f} $ to  $\Gamma_f\big(\mb A_\alpha^2(\Omega_2)\big)$ is unitarily equivalent to $\mathbf M $ on ${\mb A_\alpha^2(\Omega_2)}$, and hence,  irreducible.
\end{rem}

The following corollary follows immediately from the Theorem above.
\begin{cor}\label{self}
If $\Omega\subseteq\mb C^d$ is a domain such that $f$ is a proper holomorphic self-map of $\Omega$ of multiplicity $m.$ Then the restriction of $\mathbf M_f$ to the joint reducing subspace  $\Gamma_f\big(\mb A^2(\Omega)\big)$ is unitarily equivalent to $\mathbf M$ on $\mb A^2(\Omega).$  
\end{cor}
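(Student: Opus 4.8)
The plan is to recognize that this corollary is nothing but the diagonal specialization $\Omega_1=\Omega_2=\Omega$ of Theorem~\ref{bergshift}, so essentially no new work is required. A proper holomorphic self-map $f:\Omega\to\Omega$ of multiplicity $m$ is precisely a proper holomorphic map between two bounded domains in $\mb C^d$ in the sense already treated, with the target domain taken to coincide with the source. Hence I would begin by observing that the isometry $\Gamma_f:\mb A^2(\Omega)\to\mb A^2(\Omega)$ of \eqref{genp} is well defined and isometric exactly as in \eqref{isome}, and that its range $\Gamma_f\big(\mb A^2(\Omega)\big)$ is a joint reducing subspace for $\mathbf M_f$ by Theorem~\ref{redsub}.

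Next I would simply invoke Theorem~\ref{bergshift} verbatim, reading $\Omega_2=\Omega$. That theorem produces a surjective isometry $\Gamma_f:\mb A^2(\Omega)\to\Gamma_f\big(\mb A^2(\Omega)\big)$ intertwining $M_i$ on $\mb A^2(\Omega)$ with $M_{f_i}$ on the range, for each $i=1,\ldots,d$, via the identity $M_{f_i}\Gamma_f\psi=\Gamma_f M_i\psi$. Since $\Gamma_f$ is a unitary onto its range and conjugates $\mathbf M$ into ${\mathbf M_f}_{\vert_{\Gamma_f(\mb A^2(\Omega))}}$, the asserted unitary equivalence follows at once.

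There is no genuine obstacle here: the only point worth stating is that the hypotheses of Theorem~\ref{bergshift} impose no relation between source and target beyond properness of $f$, so collapsing them to a single domain is legitimate and changes nothing in the argument. The corollary is therefore an immediate reading of the preceding theorem, and I would present it as a one-line deduction rather than reproving the intertwining and isometry computations.

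\begin{proof}
Apply Theorem~\ref{bergshift} with $\Omega_1=\Omega_2=\Omega$. A proper holomorphic self-map $f:\Omega\to\Omega$ of multiplicity $m$ is in particular a proper holomorphic map between two bounded domains in $\mb C^d$, so all hypotheses are met. Theorem~\ref{bergshift} then yields that the restriction of $\mathbf M_f$ to $\Gamma_f\big(\mb A^2(\Omega)\big)$ is unitarily equivalent, via the isometry $\Gamma_f$, to $\mathbf M$ on $\mb A^2(\Omega)$.
\end{proof}
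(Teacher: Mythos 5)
Your proposal is correct and matches the paper exactly: the paper offers no separate argument for Corollary~\ref{self}, stating only that it ``follows immediately from the Theorem above,'' i.e.\ from Theorem~\ref{bergshift} specialized to $\Omega_1=\Omega_2=\Omega$, which is precisely your deduction. The only cosmetic point is that $\Omega$ should be read as bounded (as everywhere in the paper), which you implicitly and correctly assume.
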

In the next section, we provide examples where the Corollary above is applicable. 
 
Before proceeding further, we recall a relevant definition.
\begin{defn}
A domain $D$ in $\mb C^d$ is called a complete Reinhardt domain if whenever $\bl z\in D,$ the closed polydisc $\{\vert w_j\vert\leq\vert z_j\vert:j=1,\ldots,d\}$ is also contained in $D.$
\end{defn}
In such a domain $D,$ the monomials form a complete orthogonal system for $\mb A^2(D).$ Let $\mb Z_+$ denote the set of nonnegative integers and $\bl \a=(\a_1,\ldots,\a_d)\in\mb Z_+^d$ be a multi-index. For $\bl z=(z_1,\ldots,z_d)\in\mb C^d, \bl z^{\bl\a}:=\prod_{j=1}^dz_j^{\a_j}.$
We make a note of an observation in the following proposition.
\begin{prop}\label{onb}
 Suppose that $\Omega_1, \Omega_2$ are bounded domains in $\mb C^d$ such that $\Omega_2$ is a complete Reinhardt domain and $f=(f_1,\ldots,f_d): \Omega_1 \to\Omega_2$ is a proper holomorphic map of multiplicity $m.$ Then $\Big\{\frac{1}{\sqrt m\Vert \bl z^{\bl \a}\Vert}J_ff^{\bl \a}\Big\}_{\bl\a\in\mb Z_+^d}$ is an orthonormal basis for $\Gamma_f\big(\mb A^2(\Omega_2)\big).$
\end{prop}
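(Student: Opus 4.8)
The plan is to combine the unitary map $\Gamma_f$ with a known orthonormal basis of the target Bergman space. Since $\Omega_2$ is a complete Reinhardt domain, the monomials $\{\bl z^{\bl\a}\}_{\bl\a\in\mb Z_+^d}$ form a complete orthogonal system for $\mb A^2(\Omega_2)$, so after normalisation $\big\{\frac{1}{\Vert \bl z^{\bl\a}\Vert}\bl z^{\bl\a}\big\}_{\bl\a\in\mb Z_+^d}$ is an orthonormal basis. The central observation is that a unitary (or isometry onto its range) carries an orthonormal basis of its domain to an orthonormal basis of its range. By the isometry computation in \eqref{isome}, the map $\Gamma_f:\mb A^2(\Omega_2)\to\mb A^2(\Omega_1)$ is an isometry, hence a unitary onto $\Gamma_f\big(\mb A^2(\Omega_2)\big)$.

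Consequently, the first step is to apply $\Gamma_f$ to each normalised monomial. Writing $e_\a(\bl w):=\bl w^{\bl\a}$ for the coordinate monomial on $\Omega_2$, the defining rule \eqref{genp} gives
\[
\Gamma_f\Big(\frac{e_\a}{\Vert \bl z^{\bl\a}\Vert}\Big)=\frac{1}{\Vert \bl z^{\bl\a}\Vert}\cdot\frac{1}{\sqrt m}(e_\a\circ f)\,J_f=\frac{1}{\sqrt m\,\Vert \bl z^{\bl\a}\Vert}\,J_f\,f^{\bl\a},
\]
where $f^{\bl\a}=\prod_{j=1}^d f_j^{\a_j}=e_\a\circ f$. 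This is exactly the claimed family, so each element of the proposed set is the image under $\Gamma_f$ of an element of the orthonormal basis of $\mb A^2(\Omega_2)$.

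The second step is to invoke the general Hilbert-space fact that if $U:\mathcal H_1\to\mathcal H_2$ is an isometry and $\{u_n\}$ is an orthonormal basis of $\mathcal H_1$, then $\{Uu_n\}$ is an orthonormal basis of $U(\mathcal H_1)$: orthonormality follows from $\inner{Uu_n}{Uu_m}=\inner{u_n}{u_m}$, and completeness in the range follows because $U$ is surjective onto $U(\mathcal H_1)$ and any vector orthogonal to all $Uu_n$ would pull back to a vector orthogonal to all $u_n$. Applying this with $U=\Gamma_f$ and the normalised monomials yields the result directly.

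I expect no serious obstacle here; the only point requiring care is the bookkeeping identity $e_\a\circ f=f^{\bl\a}$ together with the normalisation constant, ensuring that the norm is taken in $\mb A^2(\Omega_2)$ (so that $\Vert \bl z^{\bl\a}\Vert$ denotes the $\mb A^2(\Omega_2)$-norm of the monomial) rather than in $\mb A^2(\Omega_1)$. Since $\Gamma_f$ is an isometry, the norm of $\frac{1}{\sqrt m\Vert \bl z^{\bl\a}\Vert}J_f f^{\bl\a}$ in $\mb A^2(\Omega_1)$ equals that of the normalised monomial in $\mb A^2(\Omega_2)$, confirming these are genuinely unit vectors. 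The completeness of the monomials in the complete Reinhardt domain $\Omega_2$ is what makes the whole argument work, and this is the hypothesis being exploited.
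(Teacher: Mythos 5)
Your proof is correct and follows essentially the same route as the paper: both use that $\Gamma_f$ is an isometry onto its range (hence carries an orthonormal basis to an orthonormal basis of the range), that the normalised monomials $\big\{\bl z^{\bl\a}/\Vert\bl z^{\bl\a}\Vert\big\}_{\bl\a\in\mb Z_+^d}$ form an orthonormal basis of $\mb A^2(\Omega_2)$ because $\Omega_2$ is a complete Reinhardt domain, and that the claimed family is exactly the $\Gamma_f$-image of this basis. Your version merely spells out the bookkeeping identity $e_{\bl\a}\circ f=f^{\bl\a}$ and the standard Hilbert-space fact, which the paper leaves implicit.
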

\begin{proof}
We observe that $\Gamma_f$ is an isometry onto its range, hence it maps an  orthonormal basis of $\mb A^2(\Omega_2)$ to an orthonormal basis in its range. Since $\Omega_2$ is a complete Reinhardt domain, $\Big\{\frac{\bl z^{\bl \a}}{\Vert\bl z^{\bl \a}\Vert}\Big\}_{\bl\a\in\mb Z_+^d}$ is an orthonormal basis of $\mb A^2(\Omega_2).$ The result follows noting that the desired orthonormal basis is the image of $\Big\{\frac{\bl z^{\bl \a}}{\Vert\bl z^{\bl \a}\Vert}\Big\}_{\bl\a\in\mb Z_+^d}$ under $\Gamma_f.$
\end{proof}

The following corollary is immediate from  Corollary \ref{self} and Proposition \ref{onb}.
\begin{cor}\label{selfR}
If $\Omega\subseteq\mb C^d$ is a complete Reinhardt domain such that $f$ is a proper holomorphic self-map of $\Omega$ of multiplicity $m.$ Then the restriction of $\mathbf M_f$ to the joint reducing subspace  $\Gamma_f\big(\mb A^2(\Omega)\big)$ is unitarily equivalent to $\mathbf M$ on $\mb A^2(\Omega).$ Moreover, 
\Bea
 \Gamma_{f}\big(\mb A^2(\Omega)\big)=\ov{\rm span} \{\mr J_f f^\a:\a\in\mb Z_+^d\} \text{~and~} \Big\{\frac{1}{\sqrt m\Vert\bl z^{\bl\a}\Vert}J_ff^\a\Big\}_{\a\in\mb Z_+^d}
 \Eea
 is an orthonormal basis of $\Gamma_f\big(\mb A^2(\Omega)\big).$
 \end{cor}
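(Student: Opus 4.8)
The plan is to specialize the two immediately preceding results to the self-map setting $\Omega_1=\Omega_2=\Omega$ and then assemble the pieces; no new computation should be required. First I would record the trivial but essential observation that a complete Reinhardt domain is in particular a bounded domain in $\mb C^d$, so that the hypotheses of both Corollary \ref{self} and Proposition \ref{onb} are met once we put $\Omega_1=\Omega_2=\Omega$. This makes the isometry $\Gamma_f:\mb A^2(\Omega)\to\mb A^2(\Omega)$ and the projection formula \eqref{projec} available in the self-map case, with the isometry property coming directly from \eqref{isome} read with $\Omega_1=\Omega_2=\Omega$.

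For the unitary equivalence half I would simply invoke Corollary \ref{self} verbatim: taking $\Omega_1=\Omega_2=\Omega$ there shows that the restriction of $\mathbf M_f$ to $\Gamma_f\big(\mb A^2(\Omega)\big)$ is unitarily equivalent to $\mathbf M$ on $\mb A^2(\Omega)$, with $\Gamma_f$ itself serving as the intertwining unitary, exactly as established in Theorem \ref{bergshift}. Nothing further is needed for this part.

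For the orthonormal basis I would apply Proposition \ref{onb} with $\Omega_2=\Omega$ the complete Reinhardt domain and $\Omega_1=\Omega$. This yields directly that $\big\{\tfrac{1}{\sqrt m\Vert\bl z^{\bl\a}\Vert}J_ff^{\bl\a}\big\}_{\bl\a\in\mb Z_+^d}$ is an orthonormal basis of $\Gamma_f\big(\mb A^2(\Omega)\big)$. The span identity then follows for free: the closed linear span of any orthonormal basis is the whole space, and the positive scalars $\tfrac{1}{\sqrt m\Vert\bl z^{\bl\a}\Vert}$ do not alter the closed span of a family of vectors, so $\Gamma_f\big(\mb A^2(\Omega)\big)=\ov{\rm span}\{J_ff^{\bl\a}:\bl\a\in\mb Z_+^d\}$.

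Since every step is a direct citation, I do not expect any genuine obstacle here; the statement is genuinely a corollary. The only point deserving a moment of care is confirming that the diagonal specialization $\Omega_1=\Omega_2=\Omega$ is legitimate, namely that a proper holomorphic self-map still produces an isometry $\Gamma_f$ whose range lies inside the same space $\mb A^2(\Omega)$ rather than a second Bergman space; this is immediate once \eqref{isome} is reread with both domains equal.
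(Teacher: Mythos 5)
Your proposal is correct and is exactly the paper's argument: the paper offers no separate proof, stating only that the corollary is immediate from Corollary~\ref{self} and Proposition~\ref{onb}, which is precisely what you do --- the unitary equivalence from the former with $\Omega_1=\Omega_2=\Omega$, the orthonormal basis from the latter, and the span identity because nonzero scalar multiples of an orthonormal basis have the same closed linear span. The one inaccuracy is your opening claim that a complete Reinhardt domain is automatically bounded --- it is not (consider $\mb C^d$ itself) --- but this is harmless here, since boundedness of $\Omega$ is a standing hypothesis of the paper's setting and should simply be invoked as such rather than deduced from the Reinhardt property.
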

 

\section{Applications}

\subsection{Example (Unit disc)}
In order to describe all possible  proper holomorphic self-maps of the unit disc $\mb D,$ we recall a definition.
\begin{defn}
A {\it finite Blaschke product} is a rational function of the form
\Bea
{\rm B}(z)=e^{i\theta}\displaystyle\prod_{j=1}^n\bigg(\frac{z-a_j}{1-\bar a_j z}\bigg)^{k_j},
\Eea
where $a_1,\ldots, a_n \in\mb D$ are the distinct zeros of $\rm B$ with multiplicities $k_1,\ldots,k_n,$ respectively and $\theta\in\mb R.$
\end{defn}

Finite Blaschke products are examples of proper holomorphic self-maps of the unit disc $\mb D.$ In fact, the following proposition shows that they constitute the set of all possible proper holomorphic self-maps of the unit disc $\mb D,$ whose proof can be found in  \cite[Remarks 3, p. 142]{MR1803086}.

\begin{prop}\label{propdisk}
Let $f:\mb D\to\mb D$ be a proper holomorphic map. Then $f$ is a finite Blaschke product.
\end{prop}


In \cite[Example 2.1, p. 334]{MR1315866}, authors produce one example of finite Blaschke product given by
\bea\label{B1}
\wi{\rm B}(z) = z^4 \bigg(\frac{z- 1/2}{1-{z}/{2}}\bigg)^2.
\eea
such that ${\rm Deck}(\wi {\rm B})$ is trivial. In \cite[4.7, p. 711]{MR667790}, Rudin exhibits another example of Blaschke product
\bea\label{B2}
\w{\rm B}(z) = \prod_{j=1}^3 \frac{z-a_j}{1-\bar a_j z},  
\text{~where~} a_1 = -\frac{1}{2}, ~ a_2 = 0, ~ a_3 =\frac{3}{4},
\eea
with trivial ${\rm Deck}(\w {\rm B}).$ It is clear that any finite Blaschke product B which has either $\wi{\rm B}$ or $\w{\rm B}$ as a factor will have trivial Deck(B).

Theorem \ref{redsub} and Corollary \ref{selfR} yield the following result which is one of the main results in \cite{MR2503238}. The same result was also obtained differently in \cite[Theorem 15, p. 393]{MR1969797} and it is the main result in \cite{MR2068435}. Put $e_n(z)=\sqrt{n+1}z^n$ for $n\geq 0$ and $z\in\mb D,$ recall that $\{e_n\}_{n=0}^\infty$ forms an orthonormal basis for $\mb A^2(\mb D).$ The multiplication operator $M$ by $z$ on $\mb A^2(\mb D)$ is a weighted shift operator, known as the Bergman shift $Me_n=\sqrt{\frac{n+1}{n+2}}e_{n+1}.$
\begin{thm}\label{D}
 Suppose that $\mr B$ is a finite Blaschke product on $\mb D$ of order $m.$ Then $\Gamma_{{\rm B}}\big(\mb A^2(\mb D)\big)$ is a non-trivial minimal reducing subspace of $M_{\mr B}$ on $\mb A^2(\mb D).$ Moreover, the restriction of $M_\mr B$ to $\Gamma_{{\rm B}}\big(\mb A^2(\mb D)\big)$ is unitarily equivalent to the Bergman shift. In fact,
 \Bea
 \Gamma_{{\rm B}}\big(\mb A^2(\mb D)\big)=\ov{\rm span} \{\mr B^n\mr B^\i:n\geq 0\} \text{~and~} \Bigg\{\sqrt{\frac{n+1}{m}}\mr B^n\mr B^\i\Bigg\}_{n=0}^\infty 
 \Eea
 is an orthonormal basis of $\Gamma_{{\rm B}}\big(\mb A^2(\mb D)\big).$
 
\end{thm}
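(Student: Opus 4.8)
The plan is to specialize the general self-map machinery to the one-variable disc setting, where every assertion follows from results already in hand. Proposition~\ref{propdisk} tells us that the class of proper holomorphic self-maps of $\mathbb{D}$ coincides with the finite Blaschke products, so the order-$m$ Blaschke product $\mathrm{B}$ is a proper holomorphic self-map of $\mathbb{D}$ of multiplicity $m$; here $\Omega_1=\Omega_2=\mathbb{D}$ and the tuple $\mathbf{M}_f$ collapses to the single operator $M_{\mathrm{B}}$. Since the disc $\mathbb{D}\subseteq\mathbb{C}$ is rotation-invariant, it is a complete Reinhardt domain, so Corollary~\ref{selfR} applies directly and yields, in one stroke, three of the claims: that $\Gamma_{\mathrm{B}}\big(\mathbb{A}^2(\mathbb{D})\big)$ is a reducing subspace, that the restriction of $M_{\mathrm{B}}$ to it is unitarily equivalent to $\mathbf{M}=M$ on $\mathbb{A}^2(\mathbb{D})$ (which is precisely the Bergman shift), and that $\Gamma_{\mathrm{B}}\big(\mathbb{A}^2(\mathbb{D})\big)=\overline{\mathrm{span}}\{J_{\mathrm{B}}\,\mathrm{B}^n:n\ge 0\}$ with orthonormal basis $\big\{\tfrac{1}{\sqrt{m}\,\|z^n\|}J_{\mathrm{B}}\,\mathrm{B}^n\big\}_{n\ge 0}$.

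Next I would carry out the two elementary identifications that put this into the stated form. In one complex variable the Jacobian is just the derivative, $J_{\mathrm{B}}=\mathrm{B}'$ (written $\mathrm{B}^{\i}$ in the statement), so $J_{\mathrm{B}}\,\mathrm{B}^n=\mathrm{B}^n\mathrm{B}'$ and the spanning description becomes $\overline{\mathrm{span}}\{\mathrm{B}^n\mathrm{B}':n\ge 0\}$. For the normalizing constant I use the value $\|z^n\|=1/\sqrt{n+1}$, which is immediate from the fact recorded just before the theorem that $e_n=\sqrt{n+1}\,z^n$ is an orthonormal basis of $\mathbb{A}^2(\mathbb{D})$. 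Hence $\tfrac{1}{\sqrt{m}\,\|z^n\|}=\sqrt{\tfrac{n+1}{m}}$, and the orthonormal basis furnished by Corollary~\ref{selfR} becomes exactly $\big\{\sqrt{\tfrac{n+1}{m}}\,\mathrm{B}^n\mathrm{B}'\big\}_{n=0}^{\infty}$, as claimed.

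It remains to verify that $\mathcal{M}:=\Gamma_{\mathrm{B}}\big(\mathbb{A}^2(\mathbb{D})\big)$ is \emph{non-trivial} and \emph{minimal}. Non-triviality splits into two halves: $\mathcal{M}\neq\{0\}$ because $\Gamma_{\mathrm{B}}$ is an isometry on a nonzero space, while $\mathcal{M}\neq\mathbb{A}^2(\mathbb{D})$ because, under the standing convention that a proper map is not biholomorphic, the earlier surjectivity criterion (that $\Gamma_f$ is onto precisely when $f$ is biholomorphic) shows $\Gamma_{\mathrm{B}}$ is not surjective. For minimality I would argue that any reducing subspace $\mathcal{N}\subseteq\mathcal{M}$ of $M_{\mathrm{B}}$ on $\mathbb{A}^2(\mathbb{D})$ is in particular a reducing subspace of the restriction $M_{\mathrm{B}}\vert_{\mathcal{M}}$; since Theorem~\ref{bergshift} shows this restriction is unitarily equivalent to the Bergman shift and hence irreducible, $\mathcal{N}$ must be $\{0\}$ or $\mathcal{M}$.

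The only step carrying any real content is this last implication --- passing from irreducibility of the restriction to minimality of $\mathcal{M}$ as a reducing subspace of the ambient operator --- and it is immediate once one notes that a reducing subspace of $M_{\mathrm{B}}$ lying inside $\mathcal{M}$ automatically reduces $M_{\mathrm{B}}\vert_{\mathcal{M}}$. Everything else is a faithful transcription of Corollary~\ref{selfR} together with the elementary norm $\|z^n\|=(n+1)^{-1/2}$, so I anticipate no genuine obstacle: the hypotheses (one variable, complete Reinhardt, self-map) are exactly those for which the preceding results were designed.
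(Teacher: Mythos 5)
Your proposal is correct and follows essentially the same route as the paper, which derives Theorem~\ref{D} directly by specializing Theorem~\ref{redsub} and Corollary~\ref{selfR} to $\Omega_1=\Omega_2=\mb D$ (using that $\mb D$ is complete Reinhardt, $J_{\rm B}={\rm B}'$, and $\Vert z^n\Vert=(n+1)^{-1/2}$). Your explicit verifications of non-triviality (via the surjectivity criterion and the standing convention that proper maps are not biholomorphic) and of minimality (a reducing subspace of $M_{\rm B}$ inside $\m M$ reduces $M_{\rm B}\vert_{\m M}$, which is irreducible by Theorem~\ref{bergshift}) are exactly the details the paper leaves implicit.
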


\subsection{Example (polydisc)}
Proper holomorphic self maps of polydisc is described by the following theorem (see \cite{MR733691}, \cite{MR1324108}).
\begin{thm}\label{B}
 If $\Omega_1,\ldots,\Omega_d,\Delta_1,\ldots,\Delta_d \subset \C$ are bounded domains and if $f : \Omega_1 \times \cdots \times \Omega_d \to \Delta_1 \times \cdots \times \Delta_d$ is a proper mapping, then there exist a permutation $\sigma$ of $\{1,\ldots,d\}$ and proper maps $f_i : \Omega_{\sigma(i)} \to \Delta_j$ such that $$f(z_1,\ldots,z_d) = \big(f_1(z_{\sigma(1)}),\ldots,f_n(z_{\sigma(d)})\big).$$
\end{thm}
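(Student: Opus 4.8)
The plan is to show that each output coordinate function $f_k$ depends on precisely one input variable and that the resulting assignment of input variables to output coordinates is a bijection. Write $f=(f_1,\ldots,f_d)$ with $\Omega=\Omega_1\times\cdots\times\Omega_d$, $\Delta=\Delta_1\times\cdots\times\Delta_d$, and each $f_k:\Omega\to\Delta_k\subset\C$ bounded and holomorphic. Since $f$ is a proper map between equidimensional bounded domains, it is a branched covering of some finite multiplicity $m$, so the only thing to establish is the product-permutation form. Thus it suffices to prove: (i) for each $k$ there is an index $j=j(k)$ with $f_k$ independent of every $z_i$, $i\neq j$; and (ii) the correspondence $k\mapsto j(k)$ is a bijection, which we then take to be $\sigma^{-1}$.

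The mechanism I would use to extract one-variable dependence is the boundary behavior forced by properness: whenever $z^{(n)}\to\partial\Omega$ one has $f(z^{(n)})\to\partial\Delta$. Because a point of $\overline{\Omega}$ lies in $\partial\Omega$ as soon as one of its coordinates lies in the corresponding $\partial\Omega_i$, I would fix interior points $a_i\in\Omega_i$ for $i\neq j$ and examine the one-variable slice $g_{j,a}(\zeta):=f(a_1,\ldots,\zeta,\ldots,a_d)$, with $\zeta$ in the $j$-th slot. Letting $\zeta\to\partial\Omega_j$ drives $g_{j,a}(\zeta)\to\partial\Delta$, so $g_{j,a}:\Omega_j\to\Delta$ is itself a proper holomorphic map; by Remmert's theorem its image is a one-dimensional closed analytic subvariety of $\Delta$ that exits through $\partial\Delta$. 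Since a point of $\partial\Delta$ has at least one coordinate on the corresponding $\partial\Delta_k$, at least one component $f_k(a_1,\ldots,\zeta,\ldots,a_d)$ tends to $\partial\Delta_k$ as $\zeta\to\partial\Omega_j$.

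The heart of the argument, and the step I expect to be the main obstacle, is to upgrade ``at least one responsible coordinate'' to ``exactly one, with genuine independence in the remaining variables.'' The naive slice argument alone does not suffice: a one-dimensional subvariety of a product can project non-constantly onto several factors (the diagonal in $\mathbb{D}^2$ being the obvious obstruction), so one cannot conclude one-variable dependence from a single slice. Here I would bring in equidimensionality, namely that $f$ has \emph{finite} multiplicity $m$ as a branched cover. Comparing, for generic $w\in\Delta$, the $m$-point fibre $f^{-1}(w)=\bigcap_k\{f_k=w_k\}$ under independent variation of each $w_k$ forces the level hypersurfaces $\{f_k=c\}$ to be unions of coordinate hyperplanes $\{z_{j(k)}=\text{const}\}$; equivalently, a degree-counting and monodromy argument along the unbranched covering $f:\Omega\setminus f^{-1}f(V_f)\to\Delta\setminus f(V_f)$ shows that moving $w_k$ alone moves only the $z_{j(k)}$-coordinate of each fibre point, which is exactly the assertion that $f_k$ is a function of $z_{j(k)}$ only. (In the model polydisc case this rigidity is transparent: a holomorphically varying one-variable family of Blaschke products must be constant in its parameters, because the Blaschke factors carry the complex conjugates $\bar a_i$ of their zeros.)

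Finally I would verify that $k\mapsto j(k)$ is a permutation. Injectivity follows because if $j(k_1)=j(k_2)$ with $k_1\neq k_2$, then both $f_{k_1}$ and $f_{k_2}$ would be functions of the single variable $z_{j(k_1)}$, forcing two rows of the Jacobian matrix to be supported in a single common column, hence linearly dependent, contradicting $J_f\not\equiv 0$; as the correspondence runs between two sets of $d$ elements, injectivity already makes it a bijection (surjectivity being consistent with the surjectivity of the proper map $f$). Setting $\sigma=(k\mapsto j(k))^{-1}$ and writing $f_k=F_k\circ(\text{projection to }z_{\sigma(k)})$ with $F_k:\Omega_{\sigma(k)}\to\Delta_k$, the properness of $f$ localizes to make each $F_k$ proper: sending $z_{\sigma(k)}\to\partial\Omega_{\sigma(k)}$ with the other variables fixed in the interior pushes $f$ to $\partial\Delta$, and since every $f_\ell$ with $\ell\neq k$ stays interior, it must be $F_k$ that reaches $\partial\Delta_k$. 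This yields the asserted form $f(z_1,\ldots,z_d)=\big(F_1(z_{\sigma(1)}),\ldots,F_d(z_{\sigma(d)})\big)$.
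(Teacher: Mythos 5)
The paper itself offers no proof of Theorem \ref{B}: it is a Remmert--Stein type splitting theorem quoted from the literature (\cite{MR733691}, \cite{MR1324108}), so your proposal must stand on its own. Parts of it do: the slice map $g_{j,a}$ is indeed proper into $\Delta$ (its preimage of a compact set is the projection of the intersection of the compact set $f^{-1}(K)$ with a closed slice); the injectivity of $k\mapsto j(k)$ via two Jacobian rows supported in a single column forcing $J_f\equiv 0$ is correct; and the final localization showing that each factor $F_k$ is proper is correct. But these are the routine steps. The theorem lives entirely in step (i) --- that each $f_k$ depends on exactly one variable --- and there your proposal has a genuine gap, at precisely the point you yourself flag as ``the main obstacle.'' The assertion that ``a degree-counting and monodromy argument shows that moving $w_k$ alone moves only the $z_{j(k)}$-coordinate of each fibre point'' is not an argument; it is a restatement of the conclusion. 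Nothing you wrote excludes the a priori possibility that perturbing $w_k$ moves \emph{all} coordinates of the fibre points, and ruling this out is exactly the content of the Remmert--Stein theorem. Likewise, ``the level hypersurfaces $\{f_k=c\}$ are unions of coordinate hyperplanes'' is the desired conclusion, not something that falls out of counting the $m$ points of a generic fibre: a finite covering can perfectly well have fibres all of whose coordinates vary with $w_k$ unless one proves otherwise.

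The parenthetical Blaschke-rigidity remark does not rescue this, for two reasons. First, it applies only to the polydisc, whereas the theorem concerns arbitrary bounded planar domains $\Omega_i,\Delta_j$, for which there is no Blaschke parametrization whose coefficients depend anti-holomorphically on the zeros. Second, even in the polydisc case, to run that rigidity argument you must first know that each one-variable slice $z_j\mapsto f_k(a_1,\ldots,z_j,\ldots,a_d)$ is a proper self-map of $\mb D$ of constant degree; but your boundary analysis only yields that \emph{some} component of $f$ escapes to $\partial\Delta$ along the slice, and which component escapes may depend on the boundary sequence. Converting ``at least one component escapes, possibly a different one along different sequences'' into ``the $k$-th component is itself slice-wise proper'' is again the heart of the matter, and it is assumed rather than proved. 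So the skeleton (one-variable dependence, then bijection, then factor properness) is the right one, but the load-bearing step is missing; as written, the proposal is not a proof.
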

Since Proposition \ref{propdisk} reads that any proper holomorphic self-map of the open unit disc $\mb D\subseteq \mb C$ is given by some finite Blaschke product, hence by Theorem \ref{B} we get that any proper holomorphic self-map $\mathbf B$ of the  polydisc $\mb D^d$ is given by 
\bea\label{Bl}
{\mathbf B}(\bl z) = \big({\rm B}_1 (z_1),\ldots, {\rm B}_d (z_d)\big),
\eea
where each ${\rm B}_i$ is a finite  Blaschke product. 

Before proceeding further, we produce examples of proper holomorphic self-maps $\mathbf B$ of $\mb D^d$ with trivial Deck($\mathbf B$). 
\begin{prop}
There is a proper holomorphic self-map $\mathbf B$ of $\mb D^d$ with ${\rm Deck}(\mathbf B)=\{{\rm identity}\}.$
\end{prop}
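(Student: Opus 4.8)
The plan is to construct an explicit proper holomorphic self-map $\mathbf B$ of $\mb D^d$ whose group of deck transformations is trivial, by exploiting the product structure guaranteed by Theorem \ref{B} together with the one-variable examples \eqref{B1} and \eqref{B2} that are already known to have trivial deck groups. Since $G \cong {\rm Deck}(f)$ whenever a proper map factors through automorphisms, producing such an example also shows (as promised in the introduction) that proper self-maps of $\mb D^d$ need not factor through automorphisms.

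First I would take the simplest candidate, namely the diagonal map $\mathbf B(\bl z) = \big(\wi{\rm B}(z_1),\ldots,\wi{\rm B}(z_d)\big)$, where $\wi{\rm B}$ is the single-variable Blaschke product of \eqref{B1} (or equally well $\w{\rm B}$ of \eqref{B2}). By Theorem \ref{B} every deck transformation $h \in {\rm Aut}(\mb D^d)$ satisfying $\mathbf B \circ h = \mathbf B$ is itself of the product-with-permutation form $h(\bl z) = \big(h_1(z_{\sigma(1)}),\ldots,h_d(z_{\sigma(d)})\big)$ for some permutation $\sigma$ and automorphisms $h_i$ of $\mb D$. The key step is then to rule out both a nontrivial permutation $\sigma$ and nontrivial componentwise automorphisms. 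The second of these reduces directly to the one-variable facts: the identity $\wi{\rm B}(h_i(z)) = \wi{\rm B}(z)$ forces $h_i \in {\rm Deck}(\wi{\rm B}) = \{{\rm identity}\}$, so each $h_i$ is the identity.

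The step I expect to require the most care is eliminating a nontrivial permutation $\sigma$. Here the relation $\mathbf B \circ h = \mathbf B$ becomes $\wi{\rm B}(h_i(z_{\sigma(i)})) = \wi{\rm B}(z_i)$ for each $i$, which compares the value of the same Blaschke product in two \emph{different} coordinate slots; so the earlier deck-group computation does not apply verbatim and I must argue that no such cross-coordinate identity can hold. The cleanest way is to break the symmetry between coordinates: instead of using the same factor in every slot, I would choose $\mathbf B(\bl z) = \big({\rm B}_1(z_1),\ldots,{\rm B}_d(z_d)\big)$ where the ${\rm B}_i$ are Blaschke products of \emph{pairwise distinct orders} (for instance compositions or powers arranged so that $\deg {\rm B}_i = i + m_0$ for suitable $m_0$, each one still containing $\wi{\rm B}$ as a factor so that its individual deck group is trivial). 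Since $\mathbf B \circ h = \mathbf B$ forces ${\rm B}_{i}$ and ${\rm B}_{\sigma^{-1}(i)}$ to have equal degree, the distinctness of the orders pins down $\sigma$ to be the identity, after which the componentwise argument above finishes the proof.

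With $\sigma$ forced to be the identity and each $h_i$ forced to be the identity by triviality of ${\rm Deck}({\rm B}_i)$, one concludes $h = {\rm identity}$, so ${\rm Deck}(\mathbf B) = \{{\rm identity}\}$, establishing the proposition. I would remark that the same construction works with either of the building blocks $\wi{\rm B}$ or $\w{\rm B}$, and that the degree-distinctness device is precisely what removes the coordinate-permutation ambiguity peculiar to the polydisc that is absent in the single-disc case.
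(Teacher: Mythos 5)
Your construction is essentially the paper's: the paper also takes a diagonal map, namely $\mathbf B(\bl z)=\big(\w{\rm B}(z_1),\ldots,\w{\rm B}(z_d)\big)$ with $\w{\rm B}$ from \eqref{B2}, invokes ${\rm Aut}(\mb D^d)\cong {\rm Aut}(\mb D)^d\rtimes\mathfrak S_d$, and concludes from the triviality of ${\rm Deck}(\w{\rm B})$ componentwise. On one point you are actually more careful than the paper: the paper writes $\bl\v=(\v_1,\ldots,\v_d)$ immediately after citing the semidirect product structure, silently discarding the permutation factor, whereas you correctly flag that a possible coordinate permutation $\sigma$ must be ruled out before the one-variable fact can be applied.

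However, your method for ruling out $\sigma$ is a non sequitur, and this is a genuine gap. Writing $h(\bl z)=\big(h_1(z_{\sigma(1)}),\ldots,h_d(z_{\sigma(d)})\big)$, the $i$-th component of the deck condition $\mathbf B\circ h=\mathbf B$ reads ${\rm B}_i\big(h_i(z_{\sigma(i)})\big)={\rm B}_i(z_i)$: the \emph{same} factor ${\rm B}_i$ appears on both sides, so this relation never compares the degree of ${\rm B}_i$ with that of ${\rm B}_{\sigma^{-1}(i)}$, and choosing the components to have pairwise distinct orders gives your argument nothing to act on. The assertion that equal degrees are forced is exactly the cross-coordinate statement you said needed proof and then did not prove. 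Fortunately the needed step is one line and requires no symmetry breaking: if $\sigma(i)\neq i$, fix $z_{\sigma(i)}=a$ and let $z_i$ vary; the left-hand side is the constant ${\rm B}_i\big(h_i(a)\big)$ while the right-hand side is a non-constant function of $z_i$ (a finite Blaschke product is never constant), a contradiction. Hence $\sigma$ must be the identity for \emph{any} choice of non-constant components --- in particular the diagonal map with identical factors already suffices, so the distinct-degree device is unnecessary --- and then your componentwise step, ${\rm B}_i\circ h_i={\rm B}_i$ forcing $h_i={\rm identity}$ by triviality of ${\rm Deck}(\wi{\rm B})$ (or ${\rm Deck}(\w{\rm B})$), finishes the proof exactly as in the paper.
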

\begin{proof}
Choose the finite Blaschke product $\w{\rm B}$ given in \eqref{B2} and define $\mathbf B:\mb D^d\to\mb D^d$ by
\bea\label{MR733691}
\mathbf B(\bl z)=\big(\w{\rm B} (z_1),\ldots, \w{\rm B} (z_d)\big)
\eea
If  $\bl\v\in{\rm Deck}(\mathbf B)$ for some $\bl\v\in {\rm Aut}(\mb D^d),$ we claim that $\bl\v={\rm identity}.$  It is well known that ${\rm Aut}(\mb D^d)\cong {{\rm Aut}(\mb D)}^d\rtimes\mathfrak S_d,$ where $\mathfrak S_d$ is the permutation group on $d$ symbols. So $\bl\v=(\v_1,\ldots,\v_d),$ where $\v_i\in {\rm Aut}(\mb D)$ for $i=1,\ldots,d.$ If $\mathbf B\circ\bl\v=\mathbf B,$ then \Bea
\mathbf B\big(\v_1(z_1),\ldots,\v_d(z_d)\big)=\mathbf B(z_1,\ldots,z_d) \text{~for all~} \bl z=(z_1,\ldots,z_d)\in\mb D^d.
\Eea
By our choice of $\mathbf B$ as in \eqref{MR733691}, we get
\Bea
\Big(\w{\rm B}\big(\v_1(z_1)\big),\ldots,\w{\rm B}\big(\v_d(z_d)\big)\Big)=\big(\w{\rm B}(z_1),\ldots, \w{\rm B}(z_1)\big) \text{~for all~} \bl z=(z_1,\ldots,z_d)\in\mb D^d.
\Eea
Hence $\w{\rm B}\circ\v_i=\w{\rm B}$ for $i=1,\ldots,d.$ Therefore, by the choice of $\w{\rm B}$ in \eqref{B2}, we have $\v_i={\rm identity}$ for $i=1,\ldots, d.$ Thus $\bl\v={\rm identity}.$
\end{proof}
\begin{rem}\rm
From the proof of the Proposition above it is clear that if  $\mathbf B$ is a proper holomorphic self-map each of whose components contains $\w{\rm B}$ as a factor then Deck($\mathbf B$) is trivial.
\end{rem}

 Put $e_{\bl m}(\bl z)=\sqrt{\prod_{i=1}^d(m_i+1)}z^{\bl m}$ for $\bl z\in \mb D^d.$ We recall that $\{e_{\bl m}\}_{\bl m\in\mb Z^d}$ is an orthonormal basis for the Bergman space $\mb A^2(\mb D^d).$ If $M_i$ denotes the multiplication operator by $z_i$ on $\mb A^2(\mb D^d),$ then $\mathbf M=(M_1,\ldots,M_d)$  is a several variable weighted shift on $\mb A^2(\mb D^n),$ also called Bergman multishift whose  multi-weight sequence is $\Big(\sqrt{\frac{m_1+1}{m_1+2}},\ldots, \sqrt{\frac{m_d+1}{m_d+2}} \Big).$ Let $\mathbf M_{\mathbf B}$ be the operator tuple $(M_{\mr B_1},\ldots M_{\mr B_d})$ acting on $\mb A^2(\mb D^d).$ Now  Theorem \ref{redsub} and Corollary \ref{selfR} immediately generalize Theorem \ref{D} to the case of the polydisc $\mb D^d.$ 
\begin{thm}
Suppose that ${\mathbf B} : \mathbb D^d \to \mb D^d$ is given by ${\mathbf B}(\bl z) = \big({\rm B}_1 (z_1),\ldots, {\rm B}_d (z_d)\big),$ where each ${\rm B}_i$ is a finite  Blaschke product of order $m_i.$ Then $\Gamma_{{\mathbf B}}\big(\mb A^2(\mb D^d)\big)$ is a non-trivial minimal reducing subspace of $\mathbf M_{\mathbf B}$ on $\mb A^2(\mb D^d).$  Moreover, the restriction of $\mathbf M_\mathbf B$ to $\Gamma_{{\mathbf B}}\big(\mb A^2(\mb D^d)\big)$ is unitarily equivalent to the Bergman multishift. Moreover,
\Bea
\Gamma_{{\mathbf  B}}\big(\mb A^2(\mb D^d)\big)=\ov{\rm span} \{J_{\mathbf B}{\rm B}^{\bl\a} : \bl\a \in \mb Z_+^d \}
\text{~and~} \Bigg\{\sqrt{\prod_{j=1}^d{\frac{\a_j+1}{m_j}}}J_{\mathbf B}\mathbf B^{\bl \a} \Bigg\}_{\bl\a\in\mb Z_+^d}
 \Eea
 is an orthonormal basis of $\Gamma_{{\rm B}}\big(\mb A^2(\mb D^d)\big).$ 
 \end{thm}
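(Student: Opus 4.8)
The plan is to obtain this theorem as a direct specialisation of Corollary \ref{selfR} to the complete Reinhardt domain $\mb D^d$, supplemented by three elementary computations: the multiplicity of $\mathbf B$, the factorisation of its Jacobian, and the Bergman norm of a monomial on the polydisc. Since $\mb D^d$ is a complete Reinhardt domain and, by Theorem \ref{B} together with Proposition \ref{propdisk}, every proper holomorphic self-map of $\mb D^d$ has exactly the coordinatewise form $\mathbf B(\bl z)=\big(\mr B_1(z_1),\ldots,\mr B_d(z_d)\big)$, the hypotheses of Corollary \ref{selfR} are met once the multiplicity $m$ is known.

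First I would compute $m$. Because $\mathbf B$ acts coordinatewise, the fibre over a generic point $\bl w=(w_1,\ldots,w_d)$ is the product $\prod_{j=1}^{d}\mr B_j^{-1}(w_j)$, and as each $\mr B_j$ has order $m_j$ this fibre has cardinality $\prod_{j=1}^{d}m_j$; hence $m=\prod_{j=1}^{d}m_j$. The diagonal structure of $\mathbf B$ also yields $J_{\mathbf B}(\bl z)=\prod_{j=1}^{d}\mr B_j'(z_j)$, so that $J_{\mathbf B}\,\mathbf B^{\bl\a}$ is precisely the image under $\Gamma_{\mathbf B}$ (up to the scalar $\tfrac{1}{\sqrt m}$) of the monomial $\bl z^{\bl\a}$, matching the span written in the statement.

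With $m$ identified, Corollary \ref{selfR} applies verbatim: $\Gamma_{\mathbf B}\big(\mb A^2(\mb D^d)\big)$ is a joint reducing subspace of $\mathbf M_{\mathbf B}$, the restriction is unitarily equivalent to $\mathbf M$ on $\mb A^2(\mb D^d)$, the range equals $\ov{\rm span}\{J_{\mathbf B}\mathbf B^{\bl\a}:\bl\a\in\mb Z_+^d\}$, and $\big\{\tfrac{1}{\sqrt m\,\Vert\bl z^{\bl\a}\Vert}J_{\mathbf B}\mathbf B^{\bl\a}\big\}_{\bl\a}$ is an orthonormal basis. I would then invoke the identification, recalled just before the statement, of $\mathbf M$ on $\mb A^2(\mb D^d)$ with the Bergman multishift, upgrading the unitary equivalence to the stated form. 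For non-triviality: since $\mathbf B$ is not biholomorphic, $m>1$, so by the criterion that $\Gamma_f$ is surjective if and only if $f$ is biholomorphic, the isometry $\Gamma_{\mathbf B}$ is not onto, whence its range is a proper nonzero closed subspace. Minimality is then immediate from Theorem \ref{bergshift}: the restriction is unitarily equivalent to a Bergman operator, hence irreducible, and a reducing subspace is minimal exactly when the restricted tuple is irreducible.

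The only remaining point, and the one requiring care rather than ideas, is to normalise the basis so as to reproduce the displayed coefficient. Working under the convention fixed in Theorem \ref{D} (so that $\Vert z^n\Vert^2=(n+1)^{-1}$ on $\mb A^2(\mb D)$), the product structure of the measure gives $\Vert\bl z^{\bl\a}\Vert^2=\prod_{j=1}^{d}(\a_j+1)^{-1}$, and substituting into the coefficient from Corollary \ref{selfR} produces
\[
\frac{1}{\sqrt m\,\Vert\bl z^{\bl\a}\Vert}
=\frac{\sqrt{\prod_{j=1}^{d}(\a_j+1)}}{\sqrt m}
=\sqrt{\prod_{j=1}^{d}\frac{\a_j+1}{m_j}},
\]
which is exactly the basis asserted. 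I expect no genuine obstacle here; the one thing to watch is keeping the normalisation of the Bergman measure consistent between $\mb A^2(\mb D)$ and $\mb A^2(\mb D^d)$, so that the factor $\prod_{j}m_j^{-1/2}$ emerges cleanly and no spurious power of $\pi$ survives.
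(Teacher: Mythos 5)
Your proposal is correct and follows essentially the same route as the paper, which obtains this theorem as an immediate specialisation of Theorem \ref{redsub} and Corollary \ref{selfR} to the complete Reinhardt domain $\mb D^d$ (with minimality coming from Theorem \ref{bergshift} and non-triviality from the non-surjectivity of $\Gamma_{\mathbf B}$). The elementary facts you supply --- multiplicity $m=\prod_{j=1}^d m_j$, the diagonal Jacobian $J_{\mathbf B}(\bl z)=\prod_{j=1}^d \mr B_j'(z_j)$, and $\Vert\bl z^{\bl\a}\Vert^2=\prod_{j=1}^d(\a_j+1)^{-1}$ --- are exactly the computations the paper leaves implicit.
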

   
\

\subsection{ Example (Symmetrized Polydisc)}
Let $\bl s : \C^d \to \C^d$ be the symmetrization map defined by $\bl s(\bl z) = \big(s_1(\bl z), \ldots , s_d(\bl z) \big),$ where $s_i$ is the \emph{elementary symmetric polynomial} in $d$ variables of degree $i$, that is, $s_i$
is the sum of all products of $i$ distinct variables $z_i$ so that
$$
s_i(\bl z) = \sum_{1\leq k_1< k_2 <\ldots <k_i\leq d} z_{k_1} \cdots z_{k_i}.$$
The symmetrization map $\bl s$ is a proper holomorphic map of multiplicity $d!$ (see \cite[Theorem 5.1]{MR667790}). The domain $\mb G_d:=\bl s(\mb D^d) $ is known as the symmetrized polydisc. It is pointed out in \cite[p. 771 ]{MR3906291} that $\mb G_d$ is not a Reinhardt domain. In \cite[Theorem 1]{MR2135687}, authors characterize proper holomorphic self-maps and automorphisms of the symmetrized polydisc $\mathbb G_d.$  In particular, they proved that $\mb G_d$ admits proper holomorphic self-maps which are not automorphisms of $G_d.$
The theorem states as following.
\begin{thm}[Edigarian-Zwonek]\label{sm}
 Let $f: \mathbb G_d \to \mathbb G_d$ be a holomorphic mapping. Then $f$ is proper if and only if there exists a finite Blaschke product 
 ${\rm B}$  such that  
 \Bea
 f(\bl s(\bl z)) = \bl s \big({\rm B} (z_1),\ldots, {\rm B} (z_d)\big) \text{~for~} \bl z=(z_1,\ldots, z_d) \in \mathbb D^d,
 \Eea
 where $\bl s$ is symmetrization map. In particular, $f$ is an automorphism if and only if 
 \Bea
 f(\bl s(\bl z)) = \bl s \big(\v (z_1),\ldots, \v (z_d)\big) \text{~for~} \bl z=(z_1,\ldots, z_d) \in \mathbb D^d,
 \Eea
 where $\v$ is an automorphism of $\mb D.$
\end{thm}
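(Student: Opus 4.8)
The plan is to transfer the problem to the branched covering $\bl s:\mb D^d\to\mb G_d$, whose deck group is the full symmetric group $\mathfrak S_d$ acting by permutation of coordinates, so that $\mb G_d=\mb D^d/\mathfrak S_d$, and then to read off the answer from the polydisc structure theorem (Theorem \ref{B}) together with the Blaschke description of $\Aut(\mb D)$. For the \emph{if} direction, given a finite Blaschke product ${\rm B}$ I set $g(\bl z)=({\rm B}(z_1),\ldots,{\rm B}(z_d))$, a proper self-map of $\mb D^d$. Since $g$ commutes with every coordinate permutation, $\bl s\circ g$ is $\mathfrak S_d$-invariant and hence descends through the quotient $\bl s$ to a holomorphic $f:\mb G_d\to\mb G_d$ with $f\circ\bl s=\bl s\circ g$; properness of $f$ is then formal, since for compact $K\subseteq\mb G_d$ one has $\bl s^{-1}(f^{-1}(K))=g^{-1}(\bl s^{-1}(K))$, which is compact, whence $f^{-1}(K)=\bl s(\bl s^{-1}(f^{-1}(K)))$ is compact.

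The substantive direction is \emph{only if}. Given proper holomorphic $f:\mb G_d\to\mb G_d$, put $F:=f\circ\bl s:\mb D^d\to\mb G_d$, which is proper. The heart of the argument is to produce a holomorphic lift $\tilde F:\mb D^d\to\mb D^d$ with $\bl s\circ\tilde F=F$. Writing $F=(F_1,\ldots,F_d)$, the point $F(\bl z)\in\mb G_d$ records the elementary symmetric functions of the $d$ roots in $\mb D$ of the companion polynomial $T^d-F_1(\bl z)T^{d-1}+\cdots+(-1)^dF_d(\bl z)$, so a lift is precisely a globally single-valued holomorphic selection of these roots. Away from the discriminant locus $F^{-1}(\bl s(V))$ (where $V=\{z_i=z_j\text{ for some }i\neq j\}$ is the diagonal) the roots are locally holomorphic and give local lifts through the genuine covering $\bl s:\mb D^d\setminus V\to\mb G_d\setminus\bl s(V)$; the obstruction to globalizing them is the monodromy representation $\pi_1(\mb D^d\setminus F^{-1}(\bl s(V)))\to\mathfrak S_d$. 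Once this monodromy is trivial, the root selection is single-valued and holomorphic off the hypersurface, stays in the bounded set $\mb D$, and so extends across $F^{-1}(\bl s(V))$ by the Riemann removable singularity theorem, giving $\tilde F:\mb D^d\to\mb D^d$; a preimage computation as above shows $\tilde F$ is itself proper.

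\textbf{I expect the triviality of this monodromy to be the main obstacle.} It is genuinely false for an arbitrary holomorphic $F:\mb D^d\to\mb G_d$ (the companion discriminant need not be a perfect square), and it is exactly here that the two hypotheses on $f$ must be used: the $\mathfrak S_d$-invariance $F(\tau\cdot\bl z)=F(\bl z)$ forced by $F=f\circ\bl s$, and the properness of $f$ (which rules out degenerate non-finite-to-one behaviour along the branch locus). Concretely I would show that, because $\pi_1(\mb D^d\setminus F^{-1}(\bl s(V)))$ is normally generated by meridians of the irreducible components of the hypersurface, it suffices to check that the local monodromy about each such component is trivial, and then use invariance together with properness to identify the roots crossing each component with a common holomorphic branch. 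With the lift $\tilde F$ in hand, Theorem \ref{B} and Proposition \ref{propdisk} give a permutation $\sigma$ and finite Blaschke products $g_1,\ldots,g_d$ with $\tilde F(\bl z)=(g_1(z_{\sigma(1)}),\ldots,g_d(z_{\sigma(d)}))$; since $\bl s$ is symmetric we may rewrite $f(\bl s(\bl z))=\bl s(\tilde F(\bl z))=\bl s(h_1(z_1),\ldots,h_d(z_d))$ with $h_j=g_{\sigma^{-1}(j)}$. Applying a transposition $\tau=(i\,j)$ to the argument and using $\bl s(\tau\cdot\bl z)=\bl s(\bl z)$ forces the unordered pair $\{h_i(z_i),h_j(z_j)\}$ to equal $\{h_i(z_j),h_j(z_i)\}$ for all $z_i,z_j$; a connectedness argument excludes the constant alternative and yields $h_i=h_j$, and since $\mathfrak S_d$ is generated by transpositions all $h_i$ coincide with a single Blaschke product ${\rm B}$, i.e. $f(\bl s(\bl z))=\bl s({\rm B}(z_1),\ldots,{\rm B}(z_d))$.

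Finally, the automorphism statement follows by comparing multiplicities. From $f\circ\bl s=\bl s\circ({\rm B},\ldots,{\rm B})$ and the multiplicativity of multiplicity under composition of proper maps, $\mult(f)\cdot d!=d!\cdot(\deg{\rm B})^d$, so $\mult(f)=(\deg{\rm B})^d$. Hence $f$ is a biholomorphism, equivalently $\mult(f)=1$, precisely when $\deg{\rm B}=1$, that is when ${\rm B}=\v\in\Aut(\mb D)$; conversely any $\v\in\Aut(\mb D)$ yields an automorphism of $\mb G_d$ by the \emph{if} direction applied to the degree-one Blaschke product. All steps except the lifting are bookkeeping with symmetric functions and the one-variable Blaschke dictionary, so the analytic weight of the proof rests on establishing the holomorphic lift $\tilde F$.
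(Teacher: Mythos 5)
The paper never proves this statement at all: it is quoted as a known theorem of Edigarian and Zwonek, cited to \cite[Theorem 1]{MR2135687}, so your attempt must be judged as a self-contained proof of that result, and as such it has a genuine gap at exactly the point you flag yourself. Everything in your argument funnels through the existence of a holomorphic lift $\tilde F:\mb D^d\to\mb D^d$ with $\bl s\circ\tilde F=F$, where $F:=f\circ\bl s$, i.e.\ through the triviality of the monodromy $\pi_1\big(\mb D^d\setminus F^{-1}(\bl s(V))\big)\to\mathfrak S_d$; for that step you offer only a plan (``I would show\ldots'', ``use invariance together with properness to identify the roots\ldots''), not an argument. This step is not a technicality; it is the entire analytic content of the theorem. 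Already for $d=2$, triviality of the local monodromy around a component of the hypersurface $F^{-1}(\bl s(V))$ is equivalent to the pulled-back discriminant $F_1^2-4F_2$ vanishing to even order along that component (equivalently, to its being the square of a holomorphic function, since $\mb D^2$ is topologically trivial), and nothing you write explains why properness of $f$ forces this parity. Note that $\mathfrak S_d$-invariance alone certainly does not: the holomorphic (non-proper) self-map $h(s_1,s_2)=(0,-s_1/2)$ of $\mb G_2$ has $\mathfrak S_2$-invariant composition $h\circ\bl s$, whose root functions $\pm\sqrt{(z_1+z_2)/2}$ have nontrivial monodromy around $\{z_1+z_2=0\}$, so no lift exists. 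Hence the trivial/nontrivial dichotomy must be settled by an argument that uses properness in an essential and precise way --- this is precisely where Edigarian and Zwonek's work lies --- and saying that properness lets one ``identify the roots crossing each component with a common holomorphic branch'' merely restates the conclusion to be proved.

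The surrounding material is essentially sound: the ``if'' direction (descent of the invariant map $\bl s\circ(\mathrm B,\ldots,\mathrm B)$ through the quotient, plus the preimage computation for properness), the reduction of the lifted map via Theorem \ref{B} and Proposition \ref{propdisk}, the multiset-cancellation/transposition argument forcing all the resulting Blaschke factors $h_i$ to coincide (the constant alternative being excluded because Theorem \ref{B} makes each $h_i$ a proper self-map of $\mb D$, hence nonconstant), and the multiplicity count $\mult(f)=(\deg\mathrm B)^d$ yielding the automorphism statement. So your skeleton is the right one and matches the known route, but as written the proof is incomplete at its one load-bearing step.
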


Before describing reducing subspaces of $\mathbf M_f,$ we exhibit examples of proper holomorphic self-maps $f$ of $\mb G_d$ with trivial Deck($f$). 
\begin{prop}
There is a proper holomorphic self-map $f $ of $\mb G_d$ with ${\rm Deck}(f)=\{{\rm identity}\}.$
\end{prop}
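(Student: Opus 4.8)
The plan is to mimic the polydisc construction, transporting a Blaschke product with trivial deck group through the symmetrization map. First I would fix a finite Blaschke product $B$ whose associated proper self-map of $\mb D$ has trivial deck group---for instance Rudin's product $\w{\rm B}$ from \eqref{B2}---and invoke Theorem \ref{sm} (Edigarian--Zwonek) to define $f:\mb G_d\to\mb G_d$ by the rule
\[
f\big(\bl s(\bl z)\big)=\bl s\big(B(z_1),\ldots,B(z_d)\big),\qquad \bl z=(z_1,\ldots,z_d)\in\mb D^d.
\]
Because the right-hand side is symmetric in $z_1,\ldots,z_d$ (permuting the $z_i$ permutes the $B(z_i)$, leaving $\bl s$ unchanged) and $\bl s$ is surjective onto $\mb G_d$, this unambiguously defines a holomorphic self-map, and Theorem \ref{sm} guarantees that $f$ is proper. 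Since $B$ is chosen not to be an automorphism of $\mb D$, the map $f$ is a genuine (non-biholomorphic) proper self-map of $\mb G_d$, which is the case of interest.

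Next I would take an arbitrary $\psi\in{\rm Deck}(f)$ and argue that $\psi={\rm identity}$. By the automorphism part of Theorem \ref{sm}, every element of ${\rm Aut}(\mb G_d)$ descends from a single disc automorphism; that is, $\psi(\bl s(\bl z))=\bl s\big(\v(z_1),\ldots,\v(z_d)\big)$ for some $\v\in{\rm Aut}(\mb D).$ The defining relation $f\circ\psi=f$ then becomes
\[
\bl s\big(B(\v(z_1)),\ldots,B(\v(z_d))\big)=\bl s\big(B(z_1),\ldots,B(z_d)\big)\qquad\text{for all }\bl z\in\mb D^d,
\]
so the two maps $\bl s\circ(B\circ\v)^{\times d}$ and $\bl s\circ B^{\times d}$ agree on $\mb D^d.$

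The final step is to peel this symmetrized identity back to the level of $B.$ Restricting to the diagonal $z_1=\cdots=z_d=z$ and comparing the first elementary symmetric polynomial $s_1$ yields $d\,B(\v(z))=d\,B(z),$ hence $B\circ\v=B$ on $\mb D.$ Thus $\v\in{\rm Deck}(B),$ and since $B$ was selected with trivial deck group, $\v={\rm identity},$ whence $\psi={\rm identity}.$ I expect the only genuine subtlety to be the appeal to the automorphism structure of $\mb G_d$ supplied by Theorem \ref{sm}: once every automorphism is known to arise from one underlying $\v\in{\rm Aut}(\mb D),$ the diagonal specialization collapses the symmetrized equality to the scalar equation $B\circ\v=B,$ and the triviality of ${\rm Deck}(B)$ closes the argument. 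As in the preceding remark for $\mb D^d,$ the same reasoning shows more generally that any proper self-map of $\mb G_d$ induced (via Theorem \ref{sm}) by a Blaschke product containing $\w{\rm B}$ as a factor has trivial ${\rm Deck}.$
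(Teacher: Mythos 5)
Your proposal is correct and takes essentially the same route as the paper: construct $f$ from Rudin's Blaschke product $\w{\rm B}$ via the symmetrization map, invoke Theorem \ref{sm} to represent any deck transformation through a single disc automorphism $\v$, and reduce $f\circ\psi=f$ to the symmetrized identity $\bl s\big(\w{\rm B}(\v(z_1)),\ldots,\w{\rm B}(\v(z_d))\big)=\bl s\big(\w{\rm B}(z_1),\ldots,\w{\rm B}(z_d)\big)$. Your diagonal specialization ($z_1=\cdots=z_d=z$, then comparing the first elementary symmetric polynomial to obtain $\w{\rm B}\circ\v=\w{\rm B}$) explicitly closes the final step, which the paper's own proof leaves implicit by stopping at the symmetrized equality.
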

\begin{proof}
Choose a proper holomorphic self-map $f$ of $\mb G_d$ given by \bea\label{Gd}
f\big(\bl s(\bl z)\big)=\bl s\big(\w{\rm B}(z_1),\ldots,\w{\rm B}(z_d)\big) \text{~for~} \bl z=(z_1,\ldots,z_d)\in\mb D^d,
\eea 
where $\w{\rm B}$ is the finite Blaschke product  given in \eqref{B2}.
If  $\v\in {\rm Deck}(f)$ for some $\v\in {\rm Aut}(\mb G_d),$ we claim that $\v={\rm identity}.$ From Theorem \ref{sm}, we know that ${\rm Aut}(\mb G_d)\cong {{\rm Aut}(\mb D)}.$ For $\bl z=(z_1,\ldots, z_d)\in\mb D^d,$ we have 
\Bea
\big(f\circ\v\big)\big(\bl s(\bl z)\big)=f\Big(\bl s\big(\v(z_1),\ldots,\v(z_d)\big)\Big)=\bl s\Big(\w{\rm B}\big(\v(z_1)\big),\ldots,\w{\rm B}\big(\v(z_d)\big)\Big).
\Eea
Since $f\circ\v=f,$ we must have 
\Bea
\big(f\circ\v\big)\big(\bl s(\bl z)\big)=\bl s\Big(\w{\rm B}\big(\v(z_1)\big),\ldots,\w{\rm B}\big(\v(z_d)\big)\Big)=\bl s\big(\w{\rm B}(z_1),\ldots,\w{\rm B}(z_d)\big).
\Eea
\end{proof}
\begin{rem}\rm
From the proof of the Proposition above it is clear that if $f$ is a proper holomorphic self-map whose associated finite Blaschke product ${\rm B}$ contains $\w{\rm B}$ as a factor then Deck($f$) is trivial.
\end{rem}

 Theorem \ref{redsub} and Corollary \ref{self} enables us to describe a non-trivial minimal reducing subspace of $\mathbf M_f$ acting on $\mb A^2(\mb G_d)$ for a proper holomorphic self-map $f$ of $\mb G_d.$
\begin{thm}\label{sym}
Suppose that $f$ is a proper holomorphic self-map of $\mb G_d.$ Then $\Gamma_f\big(\mb A^2(\mb G_d)\big)$ is a non-trivial minimal reducing subspace of $\mathbf M_f$ on $\mb A^2(\mb G_d).$  Moreover, the restriction of $\mathbf M_f$ to $\Gamma_f\big(\mb A^2(\mb G_d)\big)$ is unitarily equivalent to the Bergman operator $\mathbf M$ on $\mb A^2(\mb G_d).$
 \end{thm}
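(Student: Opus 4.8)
The plan is to invoke the machinery already assembled in Section~2 and specialize it to the case $\Omega_1=\Omega_2=\mb G_d$. By Theorem~\ref{sm} (Edigarian--Zwonek), any proper holomorphic self-map $f$ of $\mb G_d$ is induced by a finite Blaschke product $\mr B$ via $f(\bl s(\bl z))=\bl s(\mr B(z_1),\ldots,\mr B(z_d))$; in particular $f$ has a well-defined multiplicity $m$ (equal to the order of $\mr B$, since $\bl s$ is $d!$-to-one and the map $(z_1,\ldots,z_d)\mapsto(\mr B(z_1),\ldots,\mr B(z_d))$ is $m^d$-to-one on $\mb D^d$). First I would record that $f$ is proper of some multiplicity $m$, so that the isometry $\Gamma_f:\mb A^2(\mb G_d)\to\mb A^2(\mb G_d)$ of \eqref{genp} is defined and, by \eqref{isome}, is an isometry onto its closed range.

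Next I would apply Theorem~\ref{redsub} with $\Omega_1=\Omega_2=\mb G_d$ to conclude that $\Gamma_f\big(\mb A^2(\mb G_d)\big)$ is a joint reducing subspace for $\mathbf M_f$, and then Corollary~\ref{self} (the self-map specialization of Theorem~\ref{bergshift}) to conclude that the restriction ${\mathbf M_f}|_{\Gamma_f(\mb A^2(\mb G_d))}$ is unitarily equivalent, via $\Gamma_f$ itself, to the Bergman operator $\mathbf M$ on $\mb A^2(\mb G_d)$. This already delivers the unitary-equivalence clause of the theorem verbatim, so the only genuinely new content to verify is that the reducing subspace $\Gamma_f\big(\mb A^2(\mb G_d)\big)$ is both \emph{non-trivial} and \emph{minimal}.

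Non-triviality is the easy half: since $f$ is assumed to be a proper map that is \emph{not} biholomorphic (recall the standing convention fixed after the Proposition that ``proper'' means $m>1$, which is available here because $\mb G_d$ admits proper self-maps that are not automorphisms), the Proposition of Section~2 tells us $\Gamma_f$ fails to be surjective, so $\Gamma_f\big(\mb A^2(\mb G_d)\big)$ is a proper nonzero closed subspace, i.e.\ $\{0\}\neq\Gamma_f(\mb A^2(\mb G_d))\subsetneq\mb A^2(\mb G_d)$. I expect the minimality clause to be the main obstacle. The cleanest route is to observe that any joint reducing subspace $\m N\subseteq\Gamma_f(\mb A^2(\mb G_d))$ for $\mathbf M_f$ pulls back under the unitary $\Gamma_f$ to a joint reducing subspace of $\mathbf M$ on $\mb A^2(\mb G_d)$; by the second assertion of Theorem~\ref{bergshift}, ${\mathbf M_f}|_{\Gamma_f(\mb A^2(\mb G_d))}$ is \emph{irreducible}, so its only reducing subspaces are $\{0\}$ and all of $\Gamma_f(\mb A^2(\mb G_d))$, which is exactly the assertion that $\Gamma_f(\mb A^2(\mb G_d))$ is minimal as a reducing subspace of $\mathbf M_f$.

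The substance therefore rests on the irreducibility of the Bergman operator $\mathbf M$ on $\mb A^2(\mb G_d)$, which is the fact underlying Theorem~\ref{bergshift}; this in turn reduces to the general principle invoked in the introduction that the Bergman operator associated to any bounded domain is irreducible. The only point requiring care is confirming that $\mb G_d$ genuinely falls under this umbrella despite not being a Reinhardt domain, so that I cannot appeal to the monomial orthonormal basis of Proposition~\ref{onb} or Corollary~\ref{selfR}; instead I would lean directly on Corollary~\ref{self}, whose hypotheses make no regularity or Reinhardt assumption on $\Omega$. Assembling these pieces---$\Gamma_f$ an isometry, Theorem~\ref{redsub} for reducibility, the Proposition for non-triviality, and Theorem~\ref{bergshift}/Corollary~\ref{self} for the unitary equivalence and consequent minimality---completes the argument.
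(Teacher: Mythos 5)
Your proposal is correct and takes essentially the same route as the paper, which proves Theorem \ref{sym} exactly by specializing Theorem \ref{redsub} and Corollary \ref{self} to $\Omega_1=\Omega_2=\mb G_d$, with non-triviality coming from the non-surjectivity of $\Gamma_f$ for a non-biholomorphic proper map and minimality from the irreducibility clause of Theorem \ref{bergshift}. One harmless slip in your parenthetical: the multiplicity of $f$ is $m^d$ rather than $m$ when the associated Blaschke product $\mr B$ has order $m$, since $d!\cdot\mult(f)=\mult(f\circ\bl s)=\mult(\bl s\circ\mathbf B)=d!\cdot m^d$; this does not affect your argument, which only uses that $f$ is proper of some finite multiplicity.
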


It is pointed out in \cite[Corollary 3.19]{MR3906291} that the Bergman operator $\mathbf M$ on $\mb A^2(\mb G_d)$ is not unitarily equivalent to a joint weighted shift. 
 Let $ \mb{A}^{2}_{\rm anti} (\mb{D}^d)$ be the subspace of $\mb{A}^{2}  (\mb{D}^n)$ consisting of anti-symmetric functions, that is 
 \Bea
 \mb{A}^{2}_{\rm anti} (\mb{D}^d)=\{f\in\mb A^2(\mb D^d):f\circ\sigma^{-1}={\rm sgn}(\sigma)f \text{~for~} \sigma\in\mathfrak S_d\},
 \Eea
 where $\mathfrak S_d$ is the permutation group on $d$ symbols and ${\rm sgn} (\sigma)$ is $1$ or $-1$ according as $\sigma$ is an even or an odd permutation, respectively. It follows from \cite[p. 2363]{MR3043017}  that the Bergman operator $\mathbf M$ on ${\mb A^2(\mb G_n)}$ is unitarily equivalent to $\mathbf M_{\bl s}=(M_{s_1},\ldots, M_{s_d})$ on $\mb A_{\rm anti}^2(\mathbb D^d).$ Hence from Theorem \ref{sym} we conclude the following result. 

\begin{thm}
If  $f$ is a proper holomorphic self-map of $\mb G_d$  Then the restriction of  $\mathbf M_{f} $ to $ \Gamma_{f}\big(\mb A^2(\mb G_d)\big)$ is unitarily equivalent to $\mathbf M_{\bl s}$ on $\mb A_{\rm anti}^2(\mathbb D^d).$  Consequently, $\mathbf M_{\bl s}$  on $\mb A_{\rm anti}^2(\mathbb D^d)$ is irreducible.
\end{thm}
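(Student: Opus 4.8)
The plan is to obtain the asserted unitary equivalence by chaining together two equivalences that are already at hand, one proved earlier in this section and one borrowed from the literature. Concretely, Theorem~\ref{sym} tells us that for any proper holomorphic self-map $f$ of $\mb G_d$ the restriction of $\mathbf M_f$ to $\Gamma_f\big(\mb A^2(\mb G_d)\big)$ is unitarily equivalent to the Bergman operator $\mathbf M$ on $\mb A^2(\mb G_d)$, with the equivalence implemented by the isometry $\Gamma_f$. On the other hand, the fact recalled just above the statement, drawn from \cite[p. 2363]{MR3043017}, asserts that $\mathbf M$ on $\mb A^2(\mb G_d)$ is unitarily equivalent to $\mathbf M_{\bl s} = (M_{s_1},\ldots,M_{s_d})$ on $\mb A_{\rm anti}^2(\mb D^d)$.

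First I would record both equivalences as intertwinings of the respective commuting tuples by explicit unitaries. Writing $U_1 = \Gamma_f$ for the first and $U_2$ for the unitary realizing the second (which is nothing but $\Gamma_{\bl s} : \mb A^2(\mb G_d)\to\mb A_{\rm anti}^2(\mb D^d)$, whose range is precisely the antisymmetric subspace), the composition $U_2 U_1^{-1} : \Gamma_f\big(\mb A^2(\mb G_d)\big) \to \mb A_{\rm anti}^2(\mb D^d)$ carries the restriction ${\mathbf M_f}_{\vert_{\Gamma_f(\mb A^2(\mb G_d))}}$ onto $\mathbf M_{\bl s}$ on $\mb A_{\rm anti}^2(\mb D^d)$. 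Since unitary equivalence of commuting tuples is transitive, this yields the first assertion of the theorem.

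For the second assertion I would argue that irreducibility passes through the equivalence. The Bergman operator associated to any bounded domain is irreducible (as noted in the introduction), so in particular $\mathbf M$ on $\mb A^2(\mb G_d)$ is irreducible; equivalently, this is the \emph{consequently} clause of Theorem~\ref{bergshift} applied to the restriction ${\mathbf M_f}_{\vert_{\Gamma_f(\mb A^2(\mb G_d))}}$. Because a unitary intertwiner sends joint reducing subspaces to joint reducing subspaces, irreducibility is a unitary invariant of commuting tuples, and therefore $\mathbf M_{\bl s}$ on $\mb A_{\rm anti}^2(\mb D^d)$ is irreducible as well.

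The argument is essentially bookkeeping with transitivity of unitary equivalence, so I do not expect a genuine obstacle. The only point warranting care is the second ingredient: one must be sure that the cited identification is an equivalence onto the \emph{entire} antisymmetric subspace $\mb A_{\rm anti}^2(\mb D^d)$ rather than merely onto a proper subspace of it, i.e.\ that $\Gamma_{\bl s}\big(\mb A^2(\mb G_d)\big) = \mb A_{\rm anti}^2(\mb D^d)$. This surjectivity onto the antisymmetric subspace holds because $J_{\bl s}$ is (a constant multiple of) the Vandermonde determinant, so $(\psi\circ\bl s)J_{\bl s}$ sweeps out all square-integrable antisymmetric holomorphic functions as $\psi$ ranges over $\mb A^2(\mb G_d)$; this is exactly what \cite{MR3043017} supplies, and invoking it closes the argument.
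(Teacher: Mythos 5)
Your proposal is correct and is essentially identical to the paper's own argument: the paper also obtains the theorem by composing the unitary equivalence of Theorem~\ref{sym} with the identification, cited from \cite{MR3043017}, of $\mathbf M$ on $\mb A^2(\mb G_d)$ with $\mathbf M_{\bl s}$ on $\mb A^2_{\rm anti}(\mb D^d)$, and the irreducibility clause follows from unitary invariance together with the irreducibility of the restriction noted in Theorem~\ref{bergshift}. Your closing check that $\Gamma_{\bl s}\big(\mb A^2(\mb G_d)\big)$ is all of $\mb A^2_{\rm anti}(\mb D^d)$ (not a proper subspace) is exactly the content the paper delegates to the citation, so no gap remains.
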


\section{Formula for Reproducing Kernel of $\mb A^2(\Omega_2)$}
Computation of the reproducing kernel of $\Gamma_f\big(\mb A^2(\Omega_2)\big)$ for a proper holomorphic map $f:\Omega_1\to\Omega_2$ plays a crucial role in computing the Bergman kernel of $\mb A^2(\Omega_2)$ in the technique developed in \cite{MR3043017} and generalized in \cite{MR3133729}.
\begin{prop}\label{rk}
The reproducing kernel of $K_f$ of $\Gamma_f\big(\mb A^2(\Omega_2)\big)$ is given by 
\Bea
K_f(\bl z,\bl w)= \frac{1}{m}J_f(\bl z)K_2\big(f(\bl z),f(\bl w)\big)\ov{J_f(\bl w)}  \mbox{~for~} \bl z, \bl w\in\Omega_1.
\Eea

\end{prop}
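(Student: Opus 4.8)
The plan is to compute the reproducing kernel of the subspace $\Gamma_f\big(\mb A^2(\Omega_2)\big)$ by transporting the known kernel of $\mb A^2(\Omega_2)$ through the isometry $\Gamma_f$. Since $\Gamma_f : \mb A^2(\Omega_2) \to \Gamma_f\big(\mb A^2(\Omega_2)\big)$ is a surjective isometry (by \eqref{isome}), there is a general principle: if $U : \H_1 \to \H_2$ is a unitary and $\H_1$ has reproducing kernel, then the kernel of $\H_2$ is obtained by applying $U$ to the kernel functions of $\H_1$ in both variables. Concretely, I would first recall that the kernel $K_f$ of the closed subspace $\Gamma_f\big(\mb A^2(\Omega_2)\big)$ can be computed from any orthonormal basis as $K_f(\bl z,\bl w) = \sum_n g_n(\bl z)\ov{g_n(\bl w)}$, where $\{g_n\}$ is an orthonormal basis of the subspace.

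The cleanest route is to take an orthonormal basis $\{\psi_n\}$ of $\mb A^2(\Omega_2)$ and use that $\{\Gamma_f \psi_n\}$ is an orthonormal basis of $\Gamma_f\big(\mb A^2(\Omega_2)\big)$, since $\Gamma_f$ is an isometry. Then
\Bea
K_f(\bl z,\bl w) = \sum_n (\Gamma_f\psi_n)(\bl z)\,\ov{(\Gamma_f\psi_n)(\bl w)}
= \frac{1}{m}\sum_n J_f(\bl z)\,\psi_n\big(f(\bl z)\big)\,\ov{J_f(\bl w)\,\psi_n\big(f(\bl w)\big)}.
\Eea
Pulling the Jacobian factors (which do not depend on the summation index) outside the sum gives
\Bea
K_f(\bl z,\bl w) = \frac{1}{m}\,J_f(\bl z)\,\ov{J_f(\bl w)}\sum_n \psi_n\big(f(\bl z)\big)\,\ov{\psi_n\big(f(\bl w)\big)}.
\Eea
I would then recognize the remaining sum as the reproducing kernel $K_2$ of $\mb A^2(\Omega_2)$ evaluated at the points $f(\bl z)$ and $f(\bl w)$, namely $\sum_n \psi_n(\bl u)\ov{\psi_n(\bl v)} = K_2(\bl u,\bl v)$, yielding exactly the claimed formula.

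The only genuine technical point to address is the justification for interchanging the infinite sum with the evaluation and for recognizing the basis sum as $K_2$ at the pushed-forward points: this requires that $f(\bl z), f(\bl w) \in \Omega_2$ (which holds since $f$ maps $\Omega_1$ into $\Omega_2$) and that the series $\sum_n \psi_n(\bl u)\ov{\psi_n(\bl v)}$ converges to $K_2(\bl u,\bl v)$, which is the standard basis expansion of a reproducing kernel and converges locally uniformly on $\Omega_2 \times \Omega_2$. I expect this convergence bookkeeping to be the main (though still routine) obstacle; everything else is a direct substitution of the definition \eqref{genp} of $\Gamma_f$ and factoring out the index-independent Jacobian terms. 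An alternative, perhaps even more economical, argument avoids bases entirely: one checks directly that the proposed function $\frac{1}{m}J_f(\bl z)K_2\big(f(\bl z),f(\bl w)\big)\ov{J_f(\bl w)}$ lies in the subspace as a function of $\bl z$ (it is $\Gamma_f$ applied to $\frac{1}{\sqrt m}\ov{J_f(\bl w)}\,K_2(\cdot,f(\bl w))$, up to constants) and reproduces elements of $\Gamma_f\big(\mb A^2(\Omega_2)\big)$ via the isometry property $\inner{\Gamma_f\psi}{\Gamma_f\eta}_{\mb A^2(\Omega_1)} = \inner{\psi}{\eta}_{\mb A^2(\Omega_2)}$ together with the reproducing property of $K_2$; this is the approach I would favor for brevity, deferring the basis computation as a sanity check.
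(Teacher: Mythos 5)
Your proposal is correct, and both of your arguments already appear in the paper, with the roles reversed: the paper's official proof is your \emph{alternative} argument --- it observes that $\frac{1}{m}\,\ov{J_f(\bl w)}\,J_f\big((K_2)_{f(\bl w)}\circ f\big)$, which is $\Gamma_f$ applied to $\frac{1}{\sqrt m}\,\ov{J_f(\bl w)}\,(K_2)_{f(\bl w)}$, lies in the range of $\Gamma_f$ and reproduces every $\Gamma_f\psi$ by combining the isometry identity $\inner{\Gamma_f\psi}{\Gamma_f\eta}=\inner{\psi}{\eta}$ with the reproducing property of $K_2$, and then concludes by uniqueness of the reproducing kernel --- whereas your primary orthonormal-basis expansion is exactly the computation the paper records afterwards as a remark. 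Both routes are sound; for the basis route the only fact needed is that in any reproducing kernel Hilbert space the series $\sum_n g_n(\bl z)\ov{g_n(\bl w)}$ over an orthonormal basis of a closed subspace converges pointwise to that subspace's kernel (pointwise suffices, so the locally uniform convergence you invoke, while true for Bergman spaces, is more than you need), and the Jacobian factors pull out of the sum trivially since they do not depend on the summation index. Your closing instinct that the direct verification is the more economical argument matches the paper's choice of which proof to feature.
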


\begin{proof}
Since $K_2$ is  the reproducing kernel of $\mb A^2(\Omega_2),$ it follows that
\Bea
(K_2)_{f(\bl w)}:=K_2\big(\cdot, f(\bl w)\big)\in\mb A^2(\Omega_2) \text{~for~} \bl w\in\Omega_1.
\Eea
For every  $\bl w\in\Omega_1, \frac{1}{m} \ov {J_f(\bl w)}J_f\big( (K_2)_{f(\bl w)}\circ f\big)\in \Gamma_f\big(\mb A^2(\Omega_2)\big).$
For $\psi\in \mb A^2(\Omega_2),$  we note that
\Bea
&& \inner{J_f(\psi\circ f)}{\frac{1}{m} \ov{J_f(\bl w)}J_f\big( (K_2)_{f(\bl w)}\circ f\big)}\\
&=&  {J_f(\bl w)}\inner{\Gamma_f\psi}{\Gamma_f\big((K_2)_{f(\bl w)}\big)}\\
&=&  J_f(\bl w)\inner{\psi}{(K_2)_{f(\bl w)}}\\
&=&  J_f(\bl w)(\psi\circ f)(\bl w).
\Eea
Therefore, for every  $w\in\Omega_1, \frac{1}{m} \ov{J_f(\bl w)}J_f\big( (K_2)_{f(\bl w)}\circ f\big)$ has reproducing property. By the uniqueness of the reproducing kernel of a Hilbert space  with a  reproducing kernel, we conclude that $ \frac{1}{m} \ov{J_f(\bl w)}J_f\big( (K_2)_{f(\bl w)}\circ f\big),\bl w\in\Omega_1$ is the reproducing kernel function of $\Gamma_f\big(\mb A^2(\Omega_2)\big).$  If we denote the reproducing kernel of  $\Gamma_f\big(\mb A^2(\Omega_2)\big)$ by $K_f,$ then
\Bea
K_f(\bl z,\bl w)&=&\inner{(K_f)_{\bl w}}{(K_f)_{\bl z}}\\
&=&\inner{ \frac{1}{m} \ov{J_f(w)}J_f\big( (K_2)_{f(w)}\circ f\big)}{\frac{1}{m} \ov{J_f(z)}J_f\big( (K_2)_{f(z)}\circ f\big)}\\
&=& \frac{1}{m} J_f(z)\ov{J_f(w)}\inner{\Gamma_f\big((K_2)_{f(\bl w)}\big)}{\Gamma_f\big((K_2)_{f(\bl z)}\big)}\\
&=& \frac{1}{m} J_f(\bl z)\ov{J_f(\bl w)}\inner{(K_2)_{f(\bl w)}}{(K_2)_{f(\bl z)}}\\
&=& \frac{1}{m} J_f(\bl z)K_2\big(f(\bl z),f(\bl w)\big)\ov{J_f(\bl w)}  \mbox{~for~}\bl z, \bl w\in\Omega_1.
\Eea
\end{proof}

\begin{rem}\rm
 Here is an alternative way of arriving at the formula for $K_f.$ Let $\{e_{\bl \a}\}_{\a\in\m I}$ be an orthonormal basis for $\mb A^2(\Omega_2).$ Since $\Gamma_f$ is an isometry, $\{\Gamma_f e_{\bl \a}\}_{\bl\a\in\m I}$ is an orthonormal basis for $\Gamma_f\big(\mb A^2(\Omega_2)\big).$ Therefore the reproducing kernel $K_f$ of $\Gamma_f\big(\mb A^2(\Omega_2)\big)$ is given by the following formula
\Bea
K_f(\bl z, \bl w)&=& \sum_{\bl\a\in\m I}(\Gamma_fe_{\bl\a})(\bl z)\ov{(\Gamma_fe_{\bl\a})(\bl w)}\\
&=& \frac{1}{m}\sum_{\a\in\m I}J_f(\bl z)\Big(e_{\bl\a}\big(f(\bl z)\big)\Big)\ov{J_f(\bl w)\Big(e_{\bl\a}\big(f(\bl w)\big)\Big)}\\
&=&\frac{1}{m} J_f(\bl z) \Big( \sum_{\bl\a\in\m I}e_{\bl\a}\big(f(\bl z)\big)\ov{e_{\bl\a}\big(f(\bl w)\big)}\Big)\ov{J_f(\bl w)}\\
&=&\frac{1}{m} J_f(\bl z)K_2\big(f(\bl z),f(\bl w)\big)\ov{J_f(\bl w)}  \mbox{~for~} \bl z, \bl w\in\Omega_1.
\Eea
\end{rem}
 
 \subsection{Under the Action of Pseudoreflection Groups}
 Here, we restrict to proper maps which are factored by automorphisms.  We say that a proper holomorphic map $  f:\Omega_1\to \Omega_2$ is {\it factored by automorphisms} if there exists a finite subgroup $G\subseteq {\rm Aut}(\Omega_1)$ such that
\Bea
f^{-1} f(\bl z)=\bigcup_{\rho\in G}\{\rho(\bl z)\} \,\, \text{~for~} \bl z\in \Omega_1.
\Eea
It is well-known that such a group $G$ is either a group generated by pseudoreflections or conjugate to a pseudoreflection group. 
 \begin{defn}
A pseudoreflection on $\C^d$ is a linear homomorphism $\rho: \C^d \rightarrow \C^d$ such that $\rho$ has finite order in $GL(d,\mb C)$ and the rank of $I_d - \rho$ is 1, that is, $\rho$ is not the identity map and fixes a hyperplane pointwise. A group generated by pseudoreflections is called a pseudoreflection group.
\end{defn}

For a pseudoreflection $\rho,$ fix $H_{\rho} := \ker(I_d - \rho).$ By definition, the subspace $H_{\rho}$ has dimension $d-1.$ Moreover, for $\bl z \in H_{\rho},$ one has $(I_d - \rho) \bl z = 0,$ equivalently, $ \rho \bl z = \bl z,$ that is, $\rho$ fixes the hyperplane $H_{\rho}$ pointwise. We call such hyperplanes \emph{reflecting.} Suppose the distinct reflecting hyperplanes associated to the group $G$ are $H_1,\ldots,H_t.$ For each $H_i$, there exists a cyclic subgroup $K_i$ of $G$ of order $m_i$ such that every element of $K_i$ fixes $H_i$ pointwise, that is, $\rho\bl z = \bl z,$ whenever $\bl z \in H_i, \rho \in K_i$. Each $K_i$ is generated by some pseudorefelction. Suppose the defining function of each $H_i$ is denoted by the linear form $L_i$ for $i=1,\ldots,t.$  We fix the notation $f_\mu = \prod_{i=1}^t L_i^{m_i -1 }.$ Suppose $\{\theta_i \}_{i=1}^d$ is a homogeneous system of parameters associated to the finite pseudoreflection group $G.$ Then the jacobian of the map $\bl \theta := (\theta_1,\ldots,\theta_d)$ is given by $c^{-1}f_\mu$ for some scalar $c$, by \cite[Lemma, p. 616]{MR117285}.

The linear representation $\mu : G \to \C^* $ is given by $\mu (\rho) = \det^{-1}(\rho)$ for all $\rho \in G.$ Therefore, the corresponding character of the representation $\mu,$ $\chi_{\mu} : G \to \C^*$ takes $\rho \mapsto \det^{-1}(\rho).$ Note that this character has the unique property $\chi_{\mu}(\rho_1 \rho_2) = \det^{-1} (\rho_1 \rho_2) = \det^{-1} (\rho_1) \det^{-1} (\rho_2) = \chi_{\mu}(\rho_1) \chi_{\mu}(\rho_2).$ For example, the sign representation of the symmetric group on $n$ symbols is isomorphic to the representation $\mu : \mathfrak S_n \to \C^*$.

Let $\Omega_1$ be a $G$-invariant domain in $\C^d.$ Let $\m H$ be an analytic Hilbert module on $\Omega_1$ over the ring of polynomials in $n$ variables. We assume that the reproducing kernel of $\m H$ is $G$-invariant. Consider the subspace $R^G_{\mu}(\m H) = \{f \in \m H : f \circ \rho^{-1} = \chi_\mu(\rho) f, ~ {\rm for ~~ all~} \rho \in G \}$.
We call the elements of the subspace $R^G_{\mu}(\m H)$ by $\chi_\mu$-invariant. This notion of $\chi_\mu$-invariance in $\m H$ is borrowed from the invariant theory. 
\begin{lem}
Let $f \in R_\mu^G(\m H).$ Then $f_\mu$ divides $f$ and $\frac{f}{f_\mu}$ is a $G$-invariant holomorphic function on $\Omega_1.$
\end{lem}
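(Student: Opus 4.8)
The plan is to reduce the statement to the vanishing of $f$ to high order along each reflecting hyperplane of $G$, and then to upgrade these local divisibilities into a single global holomorphic factorisation $f=f_\mu\,g$. Two facts will be used repeatedly: that $f\in\m H\subseteq{\rm Hol}(\Omega_1)$ is an honest holomorphic function, and that $f_\mu=\prod_{i=1}^tL_i^{\,m_i-1}$ is, up to the scalar $c$, the jacobian $J_{\bl\theta}$ of the basic invariants $\bl\theta$, so that $f_\mu$ is itself $\chi_\mu$-semi-invariant, i.e. $f_\mu\circ\rho^{-1}=\chi_\mu(\rho)f_\mu$ for all $\rho\in G$. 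Thus $f$ and $f_\mu$ transform under $G$ by one and the same character, which I will exploit first to match vanishing orders and finally to see that the quotient is genuinely invariant.

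First I would localise at a single reflecting hyperplane $H_i$, whose pointwise stabiliser is the cyclic group $K_i=\langle\rho_i\rangle$ of order $m_i$. In linear coordinates adapted to $\rho_i$, in which the defining form $L_i$ is the transverse coordinate, $\rho_i$ scales $L_i$ by a primitive $m_i$-th root of unity $\zeta_i$ and fixes the hyperplane directions. Writing $f=\sum_{k\ge 0}a_k\,L_i^{\,k}$ as a power series in $L_i$ with coefficients holomorphic along $H_i$, the monomial $L_i^{\,k}$ is a $\rho_i$-eigenvector with eigenvalue $\zeta_i^{-k}$ under $\psi\mapsto\psi\circ\rho_i^{-1}$, so the semi-invariance $f\circ\rho_i^{-1}=\chi_\mu(\rho_i)f$ annihilates every $a_k$ except those with $\zeta_i^{-k}=\chi_\mu(\rho_i)$. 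This is exactly the eigenvalue constraint that singles out the exponent $m_i-1$ of $L_i$ in the jacobian $c^{-1}f_\mu$, which carries the same character $\chi_\mu$; consequently the surviving exponents are $k\equiv m_i-1\pmod{m_i}$, whose least nonnegative representative is $m_i-1$. Hence $f$ vanishes to order at least $m_i-1$ along $H_i$.

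Turning the hyperplane-wise vanishing orders into the divisibility $f_\mu\mid f$ with holomorphic quotient is the crux. At a generic point $p\in H_i$ with $p\notin H_j$ for $j\ne i$, the germ of $f$ lives in the local ring $\O_{\mb C^d,p}$, a unique factorisation domain in which $L_i$ is prime and coprime to the other $L_j$; the order estimate then gives $L_i^{\,m_i-1}\mid f$ in this ring. Collating over all $i$ shows that $g:=f/f_\mu$ is holomorphic on $\Omega_1\setminus\bigcup_{i<j}(H_i\cap H_j)$, the removed set being analytic of codimension at least two. By the second Riemann extension theorem $g$ extends holomorphically across this set to all of $\Omega_1$, so $f=f_\mu\,g$ with $g\in{\rm Hol}(\Omega_1)$. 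Finally, from $f\circ\rho^{-1}=\chi_\mu(\rho)f$ and $f_\mu\circ\rho^{-1}=\chi_\mu(\rho)f_\mu$ the common character cancels in the quotient, giving $g\circ\rho^{-1}=g$ for every $\rho\in G$; hence $g$ is $G$-invariant, as required. I expect the globalisation to be the only genuine obstacle: in ${\rm Hol}(\Omega_1)$ divisibility is not handed to us by a single factorisation as in $\mb C[\bl z]$, so one must first divide locally along the smooth loci of the hyperplanes and only then clear the codimension-two indeterminacy locus via Riemann extension.
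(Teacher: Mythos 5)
Your proof is correct, and its engine is the same as the paper's: localize at a reflecting hyperplane $H_i$, expand $f$ in powers of $L_i$, and let semi-invariance under the cyclic stabilizer $K_i=\langle\rho_i\rangle$ kill every exponent not congruent to $m_i-1$ modulo $m_i$, so that $f$ vanishes to order at least $m_i-1$ along $H_i$; invariance of the quotient then follows because $f$ and $f_\mu$ transform by the same character. Where you genuinely differ is in how the two delicate points are settled, and in both cases your treatment is more self-contained. First, the paper pins down the admissible exponents by computing $\chi_\mu(\rho_i)={\det}^{-1}(\rho_i)$ in adapted coordinates, which is sensitive to the action/inverse conventions it juggles; you instead calibrate against $f_\mu$ itself (a scalar multiple of the jacobian $J_{\bl\theta}$), which transforms by the same character and has exact $L_i$-exponent $m_i-1$, making the congruence $k\equiv m_i-1\pmod{m_i}$ convention-proof. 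Second, and more substantively, the paper passes from the hyperplane-wise statements ``$L_i^{m_i-1}$ divides $f$'' to the global factorization $f=f_\mu\, g$ with $g$ holomorphic essentially by citation --- it points to \cite[Lemma 2.2]{MR460484} for the divisibility argument and to \cite[Theorem 4.2]{BDGS} for holomorphy of $f/f_\mu$ --- glossing over why the separate divisibilities for different $i$ can be combined; you supply exactly this glue, dividing in the local ring $\mathcal O_{\mathbb C^d,p}$ at smooth points of each hyperplane (where the other $L_j$ are units) and then extending $f/f_\mu$ across the codimension-two set $\bigcup_{i\neq j}(H_i\cap H_j)$ by the second Riemann extension theorem. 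Your route buys a complete argument contained within the proof; the paper's buys brevity at the cost of leaning on external results.
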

\begin{proof}

Let $\rho_1$ be a generator of the cyclic subgroup of $G$ whose elements fix $H_1$ pointwise. Let $m_1$ be the smallest positive integer such that $\rho_1^{m_1} = Id.$ Using a linear change of coordinates in $\Omega_1,$ we consider a new coordinate system $y_1 = L_1, y_2=x_2,\ldots,y_n=x_n.$ Then we express that $\rho_1 = {\rm diag}(\omega ,1, \ldots, 1),$ where $\omega = e^{\frac{2\pi i}{m_1}}.$ Since $f \in R_\mu^G(\m H),$ then $f(\rho_1^{-1} \cdot (y_1,\ldots,y_n)) = \chi_\mu(\rho_1) f(y_1,\ldots,y_n).$ The action of $G$ on $\mb C^n$ is given by $\rho \cdot (y_1,\ldots,y_n) = \rho^{-1}(y_1,\ldots,y_n).$ So it turns out to be 
\Bea
f(\omega y_1, y_2, \ldots, y_n) &=& {\det}^{-1}(\rho_1) f(y_1,\ldots,y_n)\\
&=& \omega^{m_1-1} f(y_1,\ldots,y_n)
.
\Eea
So $f(y_1,\ldots,y_n)$ is divisible by $y_1^{m_1-1}.$ Now changing the coordinates, we get $f(x_1,\ldots,x_n)$ is divisible by $L_1^{m_1-1}.$ Repeating this argument for each $i=2,\ldots,t,$ we have that $f_\mu$ divides $f.$
Similar arguments are given in \cite[Lemma 2.2, p. 137]{MR460484}.

 Then $\rho(f_\mu)(y_1,\ldots,y_n) = \rho(cJ_{\bl \theta})(y_1,\ldots,y_n) = c J_{\bl \theta}(\omega y_1,\dots,y_n) = \omega^{-1}(cJ_{\bl \theta})(y_1,\ldots,y_n) = \chi_\mu(\rho) f_\mu(y_1,\ldots,y_n).$ So $f_\mu$ is $\mu$-invariant.

The quotient of a $\mu$-invariant function by a $\mu$-invariant function is evidently $G$-invariant. Hence, $\frac{f}{f_\mu}$ is $G$-invariant. The holomorphicity of $\frac{f}{f_\mu}$ on $\Omega_1$ is followed by repeating the arguments given in \cite[Theorem 4.2]{BDGS}.
\end{proof}
\begin{rem}
Any $f \in R_\mu^G(\m H)$ can be written as $f = f_\mu ~(\widehat{f} \circ \bl \theta)$ for $\widehat{f} \in \mathcal O(\Omega_2).$
\end{rem}


The linear operator $\mb P_\mu : \m H \to \m H$ is given by \bea\label{projmu}
\mb P_\mu f = \frac{1}{|G|}\sum_{\rho \in G} \chi_\mu(\rho^{-1}) f \circ \rho^{-1},
\eea
$f \in \m H.$ It is known that $\mb P_\mu$ is an orthogonal projection and the range is closed, see \cite{BDGS}. It is easy to see that $R^G_{\mu}(\m H) = \mb P_\mu (\m H).$  
To see it, suppose that $f \in \mb P_\mu (\m H)$ and $\rho_0 \in G.$ Note that $f \circ \rho_0^{-1} = \mb P_\mu(f) \circ \rho_0^{-1} = (\frac{1}{|G|} \sum_{\rho \in G} \chi_\mu(\rho^{-1}) f \circ \rho^{-1}) \circ \rho_0^{-1} = \frac{1}{|G|} \sum_{\rho \in G} \chi_\mu(\rho^{-1}) f \circ \rho^{-1} \circ \rho_0^{-1} = \frac{1}{|G|} \sum_{\eta \in G} \chi_\mu(\eta^{-1}\rho_0 ) f \circ \eta^{-1} = \chi_\mu(\rho_0) f.$ 
Conversely, for $f \in R^G_{\mu}(\m H),$ we get
$\mb P_\mu (f) = \frac{1}{|G|} \sum_{\rho \in G}  \chi_\mu(\rho^{-1}) f \circ \rho^{-1} = \frac{1}{|G|} \sum_{\rho \in G}  \chi_\mu(\rho^{-1}) \chi_\mu(\rho) f =\frac{1}{|G|} \sum_{\rho \in G}  \chi_\mu(\rho^{-1} \rho) f.$ Clearly, for any $\rho \in G,$ $\chi_\mu(\rho^{-1} \rho) = \chi_\mu(Id)= \det^{-1}(Id) = 1.$ So $\mb P_\mu (f) = \frac{1}{|G|} \sum_{\rho \in G} f = f.$ This proves the claim. 

Since $\Omega_1$ is $G$-invariant and $G$ is a subgroup of the unitary operators on $\C^d$, the Bergman kernel on $\Omega_1$ is also $G$-invariant. Therefore, we have $R^G_{\mu}(\mb A^2(\Omega_1)) = \mb P_\mu (\mb A^2(\Omega_1)).$

Recall that $\{\theta_i\}_{i=1}^d$ is a homogeneous system of parameters (h.s.o.p) associated to the pseudoreflection group $G$. We define associated polynomial map by ${\bl\theta}: \C^d \rightarrow \C^d$, where
\begin{equation*}
{\bl\theta}(\bl z) = \big(\theta_1(\bl z),\ldots,\theta_d(\bl z)\big),\,\,\bl z\in\C^d.
\end{equation*} 
\begin{prop}
Let $\Omega_1$ be a $G$-invariant domain. Then 
\begin{enumerate}
    \item[(i)] $\bl \theta(\Omega_1)$ is a domain, and
    \item[(ii)] $\bl \theta : \Omega_1 \to \Omega_2$ is a proper map, where $\Omega_2 := \bl \theta(\Omega)$.
\end{enumerate}
\end{prop}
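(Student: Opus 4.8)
The plan is to derive both statements from three structural facts about the \emph{global} map $\bl\theta:\C^d\to\C^d$. Since each $\theta_i$ is a $G$-invariant polynomial, $\bl\theta\circ\rho=\bl\theta$ for every $\rho\in G$, and the fibre $\bl\theta^{-1}(\bl\theta(\bl z))$ is precisely the orbit $G\cdot\bl z$; in particular every fibre is finite. Moreover, because $\{\theta_i\}_{i=1}^d$ is a homogeneous system of parameters, their only common zero is the origin, i.e. $\bl\theta^{-1}(0)=\{0\}$. Granting these, (i) will follow from the openness of $\bl\theta$ together with connectedness, and (ii) from the global properness of $\bl\theta$ combined with the $G$-invariance of $\Omega_1$.

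The crux is to show that $\bl\theta:\C^d\to\C^d$ is proper. Writing $d_i=\deg\theta_i$, homogeneity gives $\theta_i(t\bl z)=t^{d_i}\theta_i(\bl z)$. I argue that preimages of bounded sets are bounded. If not, choose $\bl z_n$ with $r_n:=\|\bl z_n\|\to\infty$ while $\bl\theta(\bl z_n)$ stays bounded, and set $\bl u_n=\bl z_n/r_n$; passing to a subsequence, $\bl u_n\to\bl u$ with $\|\bl u\|=1$. From $\theta_i(\bl z_n)=r_n^{d_i}\theta_i(\bl u_n)$ and $r_n\to\infty$ we get $\theta_i(\bl u_n)\to 0$, hence $\theta_i(\bl u)=0$ for all $i$, so $\bl u\in\bl\theta^{-1}(0)=\{0\}$, contradicting $\|\bl u\|=1$. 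Thus preimages of bounded sets are bounded, and since $\bl\theta$ is continuous, preimages of compacta are closed and bounded, hence compact; so $\bl\theta$ is proper.

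For (i), the fibres of $\bl\theta$ are finite, hence discrete, so $\bl\theta$ is a holomorphic map between complex manifolds of equal dimension $d$ with discrete fibres; by Remmert's open mapping theorem it is open. Consequently $\bl\theta(\Omega_1)$ is open, and it is connected as the continuous image of the connected set $\Omega_1$, so $\Omega_2=\bl\theta(\Omega_1)$ is a domain. For (ii), I first claim $\bl\theta^{-1}(\Omega_2)=\Omega_1$. The inclusion $\Omega_1\subseteq\bl\theta^{-1}(\Omega_2)$ is immediate; conversely, if $\bl\theta(\bl w)\in\Omega_2=\bl\theta(\Omega_1)$ then $\bl\theta(\bl w)=\bl\theta(\bl z)$ for some $\bl z\in\Omega_1$, whence $\bl w\in G\cdot\bl z\subseteq\Omega_1$, using that the fibre through $\bl z$ is the orbit $G\cdot\bl z$ and that $\Omega_1$ is $G$-invariant. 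Now given a compact $K\subseteq\Omega_2$, the global properness gives that $\bl\theta^{-1}(K)$ is compact in $\C^d$, and $\bl\theta^{-1}(K)\subseteq\bl\theta^{-1}(\Omega_2)=\Omega_1$; a compact subset of $\C^d$ contained in the open set $\Omega_1$ is compact in the subspace topology of $\Omega_1$. Therefore $\bl\theta:\Omega_1\to\Omega_2$ is proper.

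I expect the global properness of $\bl\theta$ to be the only genuine obstacle; the scaling argument works precisely because $\{\theta_i\}_{i=1}^d$ is a homogeneous system of parameters, forcing $\bl\theta^{-1}(0)=\{0\}$. Once properness is in hand, part (i) is a direct application of Remmert's theorem and part (ii) hinges only on the identity $\bl\theta^{-1}(\Omega_2)=\Omega_1$, which is where the $G$-invariance of $\Omega_1$ and the orbit description of the fibres are used.
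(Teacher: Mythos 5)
Your proof is correct, and it is worth noting that the paper does not actually prove this proposition: it simply cites \cite[Proposition 1, p. 556]{MR3133729}. Your argument is therefore a genuine, self-contained alternative. Its route --- (a) the scaling argument showing the global map $\bl\theta:\C^d\to\C^d$ is proper, which works precisely because the common zero locus of a homogeneous system of parameters is $\{0\}$; (b) Remmert's open mapping theorem for equidimensional holomorphic maps with discrete fibres, giving (i); and (c) the identity $\bl\theta^{-1}(\Omega_2)=\Omega_1$, which reduces properness of the restriction $\bl\theta:\Omega_1\to\Omega_2$ to global properness plus the intrinsic nature of compactness --- is clean and buys independence from the reference. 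One point needs more care in the write-up: the claim that the fibre $\bl\theta^{-1}\big(\bl\theta(\bl z)\big)$ is \emph{precisely} the orbit $G\cdot\bl z$ does not follow from the $G$-invariance of the $\theta_i$ alone, as your opening sentence suggests; invariance only gives the inclusion $G\cdot\bl z\subseteq\bl\theta^{-1}\big(\bl\theta(\bl z)\big)$. The reverse inclusion requires that the $\theta_i$ generate the full invariant ring (Chevalley--Shephard--Todd) together with the standard fact that invariant polynomials of a finite group separate its orbits (an averaging argument). This is exactly the fact the paper uses implicitly when asserting that $\bl\theta$ has multiplicity $|G|$ and ${\rm Deck}(\bl\theta)=G$, and it is the crux of your step $\bl\theta(\bl w)=\bl\theta(\bl z)\Rightarrow\bl w\in G\cdot\bl z$ in (ii); state it with a citation rather than as a consequence of invariance. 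With that fixed, every step --- finiteness and discreteness of fibres, openness and connectedness of $\bl\theta(\Omega_1)$, and the compactness transfer in (ii) --- is sound.
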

A proof can  be found in \cite[Proposition 1, p. 556]{MR3133729}. We recall the linear map defined in \eqref{genp} for this particular case. The map changes to $\Gamma_{\bl \theta} : \mb A^2\big(\Omega_2\big) \to \mb A^2(\Omega_1)$ by $\Gamma_{\bl \theta} \phi = \frac{1}{\sqrt{m}} J_{\bl \theta} (\phi \circ \bl \theta)$, where $J_{\bl \theta}$ is the complex jacobian of the map $\bl \theta$ and $m$ is the multiplicity of $\bl \theta.$ In this case, the multiplicity $m$ is the order of the group $G.$ 
Our goal is to identify the range $\Gamma_{\bl \theta} \big(\mb A^2\big(\Omega_2\big)\big)$ with the subspace $\mb P_{\mu}\big(\mb A^2(\Omega_1)\big)$. This allows us to view the reproducing kernel of $\Gamma_{\bl \theta} \big(\mb A^2\big(\Omega_2\big)\big)$ in a more convenient way.
\begin{rem} \rm
Note that we already have an orthogonal projection $P$ for the subspace $\Gamma_{\bl \theta} \big(\mb A^2\big(\Omega_2\big)\big).$ Initially, the expression of $P$ in the Equation \eqref{projec} seems different to that of $P_\mu$ in the Equation \eqref{projmu}. It is known that the group of Deck transformations of the polynomial map $\bl \theta$ is $G.$ Suppose ${\bl \theta}_k$'s are local inverses of $\bl \theta$, for $k=1,\ldots,m.$ One can check that each ${\bl \theta}_k$ can be holomorphically extended to $\Omega_2.$ We denote the functions ${\bl \theta}_k \circ \bl \theta$ by $\tilde{\bl \theta}_k$.  Note that $\bl \theta \circ \tilde{\bl \theta}_k = \bl \theta$, for all $k=1,\ldots,m.$ Therefore, each $\tilde{\bl \theta}_k$ is in $G.$ Moreover, $J_{\tilde{\bl \theta}_k} = \chi_\mu(\tilde{\bl \theta}^{-1}_k)$ for all $k$, which implies that $P$ and $P_\mu$ are essentially same in this instance. We find this discussion rather ambiguous. So we approach to prove it in a more comprehensible way.
\end{rem}
\begin{thm}\label{equiv}
Let $G$ be a finite pseudoreflecion group and $\Omega_1$ be a $G$-invariant domain in $\C^d.$ Suppose $\{\theta_i\}_{i=1}^d$ is a homogeneous system of parameters associated to the group $G$ and the corresponding polynomial map is given by $\bl \theta.$ Then $\Gamma_{\bl \theta} \big(\mb A^2\big(\Omega_2\big)\big) = \mb P_{\mu}\big(\mb A^2(\Omega_1)\big).$ 
\end{thm}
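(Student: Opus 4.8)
The plan is to prove the asserted equality of subspaces by establishing the two inclusions separately, using throughout the identification $\mb P_\mu(\mb A^2(\Omega_1)) = R^G_\mu(\mb A^2(\Omega_1))$ recorded above and the relation $J_{\bl \theta} = c^{-1} f_\mu$ between the Jacobian of $\bl \theta$ and $f_\mu$. The only two structural facts I need are: first, that $f_\mu$ is a $\chi_\mu$-relative invariant, i.e. $f_\mu \circ \rho^{-1} = \chi_\mu(\rho) f_\mu$ for every $\rho \in G$ (this follows from the generator-wise computation in the preceding Lemma, together with the fact that $\chi_\mu$ is a character and $G$ is generated by pseudoreflections, so both sides are multiplicative in $\rho$); and second, that the components $\theta_1,\ldots,\theta_d$ are $G$-invariant, so that $\bl \theta \circ \rho^{-1} = \bl \theta$ for all $\rho \in G$.

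For the inclusion $\Gamma_{\bl \theta}\big(\mb A^2(\Omega_2)\big) \subseteq \mb P_\mu\big(\mb A^2(\Omega_1)\big)$, I would take $\psi \in \mb A^2(\Omega_2)$ and verify directly that $\Gamma_{\bl \theta}\psi = \tfrac{1}{\sqrt m} J_{\bl \theta}(\psi \circ \bl \theta)$ lies in $R^G_\mu\big(\mb A^2(\Omega_1)\big)$. Since $\bl \theta \circ \rho^{-1} = \bl \theta$, the factor $\psi \circ \bl \theta$ is $G$-invariant, whereas $J_{\bl \theta} = c^{-1} f_\mu$ transforms under the character $\chi_\mu$; hence their product picks up exactly $\chi_\mu$ upon composition with $\rho^{-1}$, that is, $(\Gamma_{\bl \theta}\psi) \circ \rho^{-1} = \chi_\mu(\rho)\,\Gamma_{\bl \theta}\psi$ for all $\rho \in G$. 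As $\Gamma_{\bl \theta}\psi$ already belongs to $\mb A^2(\Omega_1)$ (because $\Gamma_{\bl \theta}$ is an isometry into $\mb A^2(\Omega_1)$ by \eqref{isome}), this places $\Gamma_{\bl \theta}\psi$ in $R^G_\mu\big(\mb A^2(\Omega_1)\big) = \mb P_\mu\big(\mb A^2(\Omega_1)\big)$.

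For the reverse inclusion, I would start with $g \in \mb P_\mu\big(\mb A^2(\Omega_1)\big) = R^G_\mu\big(\mb A^2(\Omega_1)\big)$ and invoke the preceding Lemma and the Remark following it to write $g = f_\mu\,(\widehat{g} \circ \bl \theta)$ with $\widehat{g} \in \mathcal{O}(\Omega_2)$. Substituting $f_\mu = c\, J_{\bl \theta}$ yields $g = c\sqrt m\,\Gamma_{\bl \theta}\widehat{g}$, so the whole matter reduces to upgrading $\widehat{g}$ from holomorphic to square-integrable. For this I would read the change-of-variables identity \eqref{isome} not as a statement about a pre-assumed element of $\mb A^2(\Omega_2)$ but as an equality in $[0,\infty]$, namely $\int_{\Omega_1} |J_{\bl \theta}|^2\,|\widehat{g} \circ \bl \theta|^2\,dV = m \int_{\Omega_2} |\widehat{g}|^2\,dV$. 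The left-hand side equals $|c|^{-2}\int_{\Omega_1} |g|^2\,dV$, which is finite since $g \in \mb A^2(\Omega_1)$; hence $\int_{\Omega_2} |\widehat{g}|^2\,dV < \infty$, so $\widehat{g} \in \mb A^2(\Omega_2)$ and $g = c\sqrt m\,\Gamma_{\bl \theta}\widehat{g} \in \Gamma_{\bl \theta}\big(\mb A^2(\Omega_2)\big)$. Combining the two inclusions gives the claimed equality.

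I expect the square-integrability step in the reverse inclusion to be the main obstacle: the Lemma and Remark deliver only a \emph{holomorphic} $\widehat{g}$, and one must still guarantee that it lies in the Bergman space rather than merely in $\mathcal{O}(\Omega_2)$. Interpreting \eqref{isome} as an equality of (possibly infinite) integrals, rather than as an identity between norms of functions already known to be $L^2$, is exactly what closes this gap.
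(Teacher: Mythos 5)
Your proposal is correct and follows essentially the same route as the paper: one inclusion via the factorization $g = f_\mu\,(\widehat{g}\circ\bl\theta)$ from the preceding Lemma together with the change-of-variables identity read as an equality of (possibly infinite) integrals to force $\widehat{g}\in\mb A^2(\Omega_2)$, and the other by checking that $\Gamma_{\bl\theta}\psi$ is $\chi_\mu$-invariant since $\psi\circ\bl\theta$ is $G$-invariant and $J_{\bl\theta}$ transforms by $\chi_\mu$. The only differences are cosmetic: you treat the constant $c$ in $f_\mu = c\,J_{\bl\theta}$ explicitly (the paper absorbs it), and the square-integrability point you flag as the main obstacle is handled in the paper by exactly the argument you give.
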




\begin{proof}
Let $h \in \mb P_\mu (\mb A^2(\Omega_1)).$ Then $h = f_\mu ~\widehat{h} \circ \bl \theta$ for $\widehat{h} \in \mathcal O(\Omega_1).$ Clearly, $$\norm{h}^2_{\Omega_1} = \int_{\Omega_1} |h|^2 dA = \int_{\Omega_1} |f_\mu|^2 |\widehat{h} \circ \bl \theta|^2 dA = m \int_{\Omega_2} |\widehat{h}|^2 dA.$$ Since $\norm{h}^2_{\Omega_1} < \infty,$ evidently $\widehat{h} \in \mb A^2(\Omega_2).$ The image of $\sqrt{m}~ \widehat{h}$ under $\Gamma_{\bl \theta}$ is $h.$ So $h \in \Gamma_{\bl \theta} \big(\mb A^2(\Omega_2)\big).$

On the other hand, $\rho(\Gamma_{\bl \theta}h)(\bl z) =  \frac{1}{\sqrt{m}} J_{\bl \theta}(\rho^{-1} \cdot \bl z) ~ (f \circ \bl \theta)(\rho^{-1} \cdot \bl z) = \chi(\rho) J_{\bl \theta}(\bl z) ~ (f \circ \bl \theta)(\bl z) = \chi(\rho) (\Gamma_{\bl \theta} h)(\bl z) .$ That is $\Gamma_{\bl \theta} h \in \mb P_\mu (\mb A^2(\Omega_1))$ for all $h \in \mb A^2(\Omega_2).$
\end{proof}

The reproducing kernel of $\mb A^2(\Omega_i)$ is denoted by $K_i,$ for $i=1,2.$ In the following discussion, we determine an expression of $K_2$ in terms of $K_1.$ The Proposition \ref{rk} describes a formula for $K_2$. It states that
the reproducing kernel of $K_\mu$ of $\Gamma_{\bl \theta}\big(\mb A^2(\Omega_2)\big)$ is given by 
\bea\label{first}
K_\mu(\bl z,\bl w)= \frac{1}{m}J_{\bl \theta}(\bl z)K_2\big(\bl \theta (\bl z),\bl \theta (\bl w)\big)\ov{J_{\bl \theta}(\bl w)}  \mbox{~for~} \bl z, \bl w\in\Omega_1.
\eea

Since $\mb P_\mu$ is the orthogonal projection corresponding to the subspace $\Gamma_{\bl \theta}\big(\mb A^2(\Omega_2)\big),$ the reproducing kernel $K_\mu(\bl z, \bl w) = \inner{\mb P_\mu (K_1)_{\bl w}}{(K_1)_{\bl z}}.$ Then 
\bea\label{sec}
K_\mu(\bl z, \bl w) = \big( \mb P_\mu (K_1)_{\bl w} \big) (\bl z) &=&\nonumber \frac{1}{|G|} \sum_{\sigma \in G}\chi_\mu(\sigma^{-1})  (K_1)_{\bl w}\circ \sigma^{-1}(\bl z) \\&=&\nonumber \frac{1}{|G|} \sum_{\sigma \in G}\chi_\mu(\sigma^{-1})  (K_1)_{\bl w}(\sigma^{-1} \cdot \bl z) \\ &=& \frac{1}{|G|} \sum_{\sigma \in G}\chi_\mu(\sigma^{-1})  K_1(\sigma^{-1} \cdot \bl z, \bl w ).
\eea

Combining the expression of \eqref{first} and \eqref{sec}, we state the next proposition. \begin{prop}\label{kernel}
The reproducing kernel of $\mb A^2(\Omega_i)$ is denoted by $K_i,$ for $i=1,2.$
 Then for $\bl z, \bl w \in \Omega_1,$ \bea\label{thir}K_2(\bl \theta(\bl z), \bl \theta(\bl w)) = J_{\bl \theta}^{-1}(\bl z)\big( \displaystyle\sum_{\sigma \in G} \chi_\mu(\sigma^{-1} ) K_1(\sigma^{-1} \cdot \bl z, \bl w)  \big) \overline{J_{\bl \theta}^{-1}(\bl w)}.\eea
\end{prop}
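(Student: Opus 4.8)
The plan is to obtain \eqref{thir} by simply equating the two expressions for the reproducing kernel $K_\mu$ of the subspace $\Gamma_{\bl\theta}\big(\mb A^2(\Omega_2)\big)$ that have already been displayed in \eqref{first} and \eqref{sec}, and then solving for $K_2\big(\bl\theta(\bl z),\bl\theta(\bl w)\big)$. The point that legitimizes treating \eqref{first} and \eqref{sec} as two formulas for one and the same function is Theorem \ref{equiv}: it identifies $\Gamma_{\bl\theta}\big(\mb A^2(\Omega_2)\big)$ with $\mb P_{\mu}\big(\mb A^2(\Omega_1)\big)$, so these subspaces carry a single reproducing kernel $K_\mu$. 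Formula \eqref{first} computes that kernel through the $\Gamma_{\bl\theta}$ description via Proposition \ref{rk}, whereas \eqref{sec} computes it through the projection description, using that $\mb P_\mu$ is the orthogonal projection onto the subspace together with the standard identity $K_\mu(\bl z,\bl w)=\inner{\mb P_\mu (K_1)_{\bl w}}{(K_1)_{\bl z}}$.

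The first ingredient I would record is the arithmetic fact, noted just before Theorem \ref{equiv}, that the multiplicity $m$ of $\bl\theta$ equals the order $|G|$ of the pseudoreflection group. With this, the scalar prefactors $\tfrac1m$ in \eqref{first} and $\tfrac{1}{|G|}$ in \eqref{sec} agree and cancel when the two right-hand sides are set equal, leaving
\Bea
J_{\bl\theta}(\bl z)\,K_2\big(\bl\theta(\bl z),\bl\theta(\bl w)\big)\,\ov{J_{\bl\theta}(\bl w)}=\sum_{\sigma\in G}\chi_\mu(\sigma^{-1})\,K_1(\sigma^{-1}\cdot\bl z,\bl w).
\Eea
Multiplying on the left by $J_{\bl\theta}^{-1}(\bl z)$ and on the right by $\ov{J_{\bl\theta}(\bl w)}^{-1}=\ov{J_{\bl\theta}^{-1}(\bl w)}$ then produces exactly \eqref{thir}.

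The only matter needing a word of care is that these divisions presuppose $J_{\bl\theta}(\bl z)\neq 0$ and $J_{\bl\theta}(\bl w)\neq 0$; since $J_{\bl\theta}=c^{-1}f_\mu$ vanishes only on the union of the reflecting hyperplanes (a set of measure zero), the identity is established on the complement of this branch locus, where both sides are honestly defined and holomorphic, respectively anti-holomorphic, in $\bl z$ and $\bl w$. I do not expect a genuine obstacle: the entire argument is algebraic, and all the substantive work—producing the two independent descriptions \eqref{first} and \eqref{sec} of $K_\mu$ and identifying the two subspaces—has already been carried out in Proposition \ref{rk}, in the derivation of \eqref{sec}, and in Theorem \ref{equiv}. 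The proposition is therefore a direct consequence of combining these ingredients.
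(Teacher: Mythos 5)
Your proposal is correct and takes essentially the same approach as the paper: the paper's own derivation consists precisely of equating the two expressions \eqref{first} and \eqref{sec} for the kernel $K_\mu$ of $\Gamma_{\bl \theta}\big(\mb A^2(\Omega_2)\big) = \mb P_{\mu}\big(\mb A^2(\Omega_1)\big)$ (using $m=|G|$) and dividing by $J_{\bl \theta}(\bl z)\overline{J_{\bl \theta}(\bl w)}$. The one point where the paper goes slightly further is at the zero set $N$ of $J_{\bl \theta}$: where you restrict the identity to the complement of $N$, the paper observes that $K_\mu(\bl z,\bl w)$ is divisible by $J_{\bl \theta}(\bl z)\overline{J_{\bl \theta}(\bl w)}$ (by writing $\mb P_\mu (K_1)_{\bl w} = f_\mu\,(\widehat{h}\circ\bl\theta)$ in each variable separately), so the right-hand side of \eqref{thir} is well-defined for all $\bl z,\bl w\in\Omega_1$.
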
 
Now, we settle the ambiguity of the expression \eqref{thir} for $\bl z, \bl w$ in $N$, where $N \subseteq \Omega_1$ is the zero set of the function $J_{\bl \theta}$.
Note that $K_\mu(\bl z, \bl w) = \big( \mb P_\mu (K_1)_{\bl w} \big) (\bl z) = J_{\bl \theta}(\bl z) \big((K_1)_{\bl w}^\mu \circ \bl \theta(\bl z)\big)$ for some $(K_1)_{\bl w}^\mu \in \mathcal O(\Omega_2)$ for a fixed but arbitrary $\bl w \in \Omega_1$. Similarly, $K_\mu(\bl z, \bl w) = \big(\overline{ \mb P_\mu (K_1)_{\bl z}} \big) (\bl w) = \overline{J_{\bl \theta}(\bl w)} \big( \overline{(K_1)_{\bl z}^\mu \circ \bl \theta}(\bl w)\big)$ for some $(K_1)_{\bl z}^\mu \in \mathcal O(\Omega_2),$ when $\bl z$ is fixed but arbitrary. The variables $\bl z$ and $\bl w$ are independent of each other in $K_\mu(\bl z, \bl w)$, so it is divisible by $J_{\bl \theta}(\bl z) \overline{J_{\bl \theta}(\bl w)}$ for every $\bl z, \bl w \in \Omega_1.$ Therefore $J_{\bl \theta}^{-1}(\bl z) K_\mu(\bl z, \bl w) \overline{J_{\bl \theta}^{-1}(\bl w)}$ is well-defined, even if $\bl z$ or $\bl w$ belongs to $N.$

\begin{ex}
Suppose $G=\mathfrak S_n$. The elementary symmetric polynomials of degree $k$ in $n$ variables $s_k$ form a homogeneous system of parameters corresponding to the group $\mathfrak S_n,$ where $k=1,\ldots,n.$ Hence, we take the symmetrization map $\bl s :=(s_1,\ldots,s_n) : \mb D^n \to \mb G_n$ in the place of $\bl \theta$ in Proposition \ref{kernel}.  The Bergman kernel $K_1$ is $\mathfrak S_n$-invariant, that is, $K_1(\sigma \cdot \bl z, \sigma \cdot \bl w) = K_1(\bl z, \bl w)$ for $\sigma \in \mathfrak S_n$ and $\bl z, \bl w \in \mb D^n.$ This implies 
\Bea
K_2(\bl s(\bl z), \bl s(\bl w)) &=& J_{\bl s}^{-1}(\bl z)\big( \displaystyle\sum_{\sigma \in \mathfrak S_n} \chi_\mu(\sigma^{-1} ) K_1(\sigma^{-1} \bl z, \bl w)  \big) \overline{J_{\bl s}^{-1}(\bl w)}\\ &=& J_{\bl s}^{-1}(\bl z)\big( \displaystyle\sum_{\sigma \in \mathfrak S_n} {\rm sgn}(\sigma^{-1} ) K_1( \bl z, \sigma \cdot \bl w)  \big) \overline{J_{\bl s}^{-1}(\bl w)} \\  &=& J_{\bl s}^{-1}(\bl z)\big( \displaystyle\sum_{\sigma \in \mathfrak S_n} {\rm sgn}(\sigma^{-1} ) \prod_{i=1}^n(1-z_i\bar{w}_{\sigma^{-1}(i)})^{-2}  \big) \overline{J_{\bl s}^{-1}(\bl w)} \\ &=& J_{\bl s}^{-1}(\bl z) \Big( \det \big(\big( (1 - z_i \bar{w_j})^{-2}\big)\big)_{i,j =1 }^{n} \Big) \overline{J_{\bl s}^{-1}(\bl w)}. 
\Eea
This expression of Bergman kernel of symmetrized polydisc in terms of Bergman kernel of polydisc has been derived in \cite[Proposition 9., p. 369]{MR2135687}.
\end{ex}

\section*{acknowledgement}
The results of this article form a part of my PhD Thesis at Indian Institute of Science Education and Research Kolkata. I would like to thank my thesis supervisor S. Shyam Roy for his numerous helpful comments and suggestions regarding the material of this article.

\bibliographystyle{siam}
\bibliography{Bibliography.bib}

\end{document}